\newif\ifspringer
\newif\ifelsevier
\tikzstyle{every picture}+=[remember picture]% for every picture that defines or uses external nodes, you'll have to apply the 'remember picture' style
\tikzset{fontscale/.style = {font=\relsize{#1}}}
\DeclareRobustCommand\onedot{\futurelet\@let@token\@onedot}
\newcommand{\@onedot}{\ifx\@let@token.\else.\null\fi\xspace}
\newcommand{\eg}{{e.g}\onedot} 
\newcommand{\ie}{{i.e}\onedot}
\newcommand{\st}{{s.t}\onedot}
\newtheorem{prop}{Proposition}
\newtheorem{rem}{Remark}
\theoremstyle{definition}
\newtheorem{defn}{Definition}
\newtheorem{exmp}{Example}
\newcommand{\vectbi}[1]{\boldsymbol{#1}} % bold italic
\newcommand{\vect}[1]{\vectbi{#1}}
\renewcommand{\leq}{\leqslant}
\renewcommand{\geq}{\geqslant}
\newcommand{\KKK}{\mathcal{K}}
\newcommand{\PPP}{\mathcal{P}}
\newcommand{\XXX}{\mathcal{X}}
\newcommand{\SSS}{\mathcal{S}}
\DeclareMathAlphabet{\mathcalb}{OMS}{cmsy}{b}{n} % bold Computer Modern font mathcal
\DeclareMathAlphabet{\mathcal}{OMS}{cmsy}{m}{n} % Computer Modern font mathcal
\newcommand{\bd}{\mathbf{d}}
\newcommand{\bs}{\vect{s}}
\newcommand{\bt}{\vect{t}}
\newcommand{\bA}{\mathrm{A}}
\newcommand{\bM}{\mathrm{M}}
\newcommand{\bN}{\mathrm{\bf{N}}}
\newcommand{\bNz}{\mathrm{\bf{N0}}}
\newcommand{\bIN}{\mathrm{\bf{IN}}}
\newcommand{\bINz}{\mathrm{\bf{IN0}}}
\newcommand{\bG}{\mathrm{\bf{G}}}
\newcommand{\bDelta}{\vect{\Delta}}
\newlength{\casesvsep}
\newcommand{\figpath}{figure}
\newcommand*{\shifttext}[2]{%
\settowidth{\@tempdima}{#2}%
\makebox[\@tempdima]{\hspace*{#1}#2}%
}
\newcommand{\qtext}[1]{``#1''}
\journalname{\dots}
\providecommand{\doi}[1]{%
  \begingroup
    \let\bibinfo\@secondoftwo
    \urlstyle{rm}%
    \href{http://dx.doi.org/#1}{%
      doi:\discretionary{}{}{}%
      \nolinkurl{#1}%
    }%
  \endgroup
}
\newtheorem{thm}{Proposition}
\def\ps@pprintTitle{%
   \let\@oddhead\@empty
   \let\@evenhead\@empty
   \def\@oddfoot{\reset@font\hfil\thepage\hfil}
   \let\@evenfoot\@oddfoot
}
\begin{document}
\newcommand{\titletext}{
Stable numerical evaluation of multi-degree B-splines
}
\newcommand{\titlerunningtext}{
Stable numerical evaluation of multi-degree B-splines
}

\newcommand{\abstracttext}{
Multi-degree splines are piecewise polynomial functions having sections of different degrees. 
They offer significant advantages over the classical uniform-degree framework, as they allow for modeling complex geometries with fewer degrees of freedom and, at the same time, for a more efficient engineering analysis. 
Moreover they possess a set of basis functions with similar properties to standard B-splines.
In this paper we develop an algorithm for efficient evaluation of multi-degree B-splines, which, unlike previous approaches, is numerically stable. 
The proposed method consists in explicitly constructing a mapping between a known basis and the multi degree B-spline basis of the space of interest, exploiting the fact that the two bases are related by a sequence of knot insertion and/or degree elevation steps and performing only numerically stable operations.
In addition to theoretically justifying the stability of the algorithm, we will illustrate its performance through numerical experiments that will serve us to demonstrate its excellent behavior in comparison with existing methods, which, in some cases, suffer from apparent numerical problems.
}

% keyword separator
\ifspringer
\newcommand{\separ}{\and}
\fi
\ifelsevier
\newcommand{\separ}{\sep}
\fi

% keywords
\newcommand{\keytext}{
Multi-degree spline \separ B-spline basis \separ matrix representation \separ stable evaluation \separ algorithmic computation \separ Greville absciss\ae
}

% MSC classification
\newcommand{\MSCtext}{
65D07 \separ 65D15 \separ 41A15 \separ 68W40
}

% title, authors, abstract (Springer)
\ifspringer
\title{\titletext
%\thanks{Grants or other notes
%about the article that should go on the front page should be
%placed here. General acknowledgments should be placed at the end of the article.}
}
%\subtitle{Do you have a subtitle?\\ If so, write it here}
\titlerunning{\titlerunningtext} % if title is too long for running head

\author{
Carolina Vittoria Beccari \and
Giulio Casciola 
}
\authorrunning{C.V.~Beccari, G.~Casciola} % short form of author list if too long for running head

\date{Received: date / Accepted: date}
% The correct dates will be entered by the editor

\maketitle

\begin{abstract}
\abstracttext
\keywords{\keytext}
\subclass{\MSCtext}
\end{abstract}
\fi

% title, authors, abstract (Elsevier)
\ifelsevier
\begin{frontmatter}

\title{\titletext}

\author[label1]{Carolina Vittoria Beccari\corref{cor1}}
\ead{carolina.beccari2@unibo.it}
\author[label1]{Giulio Casciola}
\ead{giulio.casciola@unibo.it}
%\author[label2]{Serena Morigi}
%\ead{serena.morigi@unibo.it}

\cortext[cor1]{Corresponding author.}

\address[label1]{Department of Mathematics, University of Bologna,
Piazza di Porta San Donato 5, 40126 Bologna, Italy}

\begin{abstract}
\abstracttext
\end{abstract}

\begin{keyword}
\keytext
\MSC[2010]\MSCtext
\end{keyword}

\end{frontmatter}
\fi

\section{Introduction}
\label{sec:intro}
Spline functions are the foundation of numerous results and methods of approximation theory and, nowadays, are an integral part of geometric modeling and computational engineering analysis systems.
Classically, a univariate spline is a piecewise function defined on a partition of a real interval $[a, b]$, where each piece belongs to the space of algebraic polynomials of degree less than or equal to $d\geq0$ and where two pieces are joined with continuity at most $C^{d-1}$.
The success of splines is largely due to the fact that they possess a \emph{B-spline basis}, namely a normalized, totally positive basis of compactly supported functions \cite{dB78,Schu2007}.
Besides the elegance of the theoretical framework, this basis has excellent properties from the computational point of view, both for its good conditioning, and because its evaluation can be carried out  via an efficient and numerically stable algorithm, the well-known Cox-de Boor recurrence scheme \cite{Cox1972,dB1972}. These classical splines will be hereinafter referred to also as \emph{conventional splines}. 

As the name suggests, \emph{multi-degree splines} (\emph{MD-splines}, for short) are a generalization of conventional splines where each piece can have a different degree.
They are a natural and extremely powerful extension of the classical framework, which allows for modeling complex geometries with fewer control points and at the same time leads to more efficient engineering analysis \cite{TSHH2017}.
Although the concept of multi-degree splines is long-standing \cite{NSSS84,SZS2003}, for several years the interest in these spaces has been mostly theoretical. Only recently, in fact, has it been understood how to build a set of functions analogue to the B-spline basis, dubbed \emph{MDB-spline basis} (or simply \emph{MDB-splines}). This has opened up the possibility of easily integrating multi-degree splines into current computing systems and has made them a real full-fledged extension of conventional splines. Multivariate versions of the multi-degree concept have been as well devised \cite{LIU201642,coreform2018,TSHH2017}.

The first approaches for constructing an MDB-spline basis can be traced back to the work by Shen and Wang \cite{SW2010b,SW2010a} and are subject to constraints on the continuity attained at the joins. More importantly, they rely on integral recurrence relations, which, as firstly observed in \cite{SW2013}, are widely recognized to be overly complicated and of little practical use.

Subsequently, alternative and more computationally practicable methods were proposed.
Ideally, one would want evaluation techniques based on algebraic recurrence relations, in the spirit of the famous Cox-de Boor's method. However, such recurrence schemes have been identified and proven to exist only for particular multi-degree spline spaces and precisely those where pieces of different degree are joined with continuity at most $C^1$ \cite{BC2019,Li2012}.

Methods capable of dealing with multi-degree spaces with arbitrary structure stem from a common basic idea, which is to map a set of known (or easily computable) functions into the basis of interest. They can be traced back to two different approaches.
The first consists in determining the basis functions by interpolation, exploiting the fact that, under suitable assumptions, Hermite interpolation problems are unisolvent in MD-spline spaces \cite{BMuhl2003}. This involves solving a number of (small) linear systems that represent the continuity conditions at the joins.  
Following this approach, in \cite{BCM2017} normalized MDB-splines are expressed as combinations of \emph{transition functions} (a notion earlier introduced in the context of local spline interpolation \cite{ABC2013a, BCR2013a}), which allows to efficiently compute  their expansion with respect to the collection of the Bernstein bases relative to the breakpoint intervals.
In \cite{BCR2017} the same idea was used to deal with splines whose pieces are drawn from Extended Chebyshev spaces \cite{Maz2011b,Schu2007}, a powerful and versatile extension of algebraic polynomials.

The second approach stems from the idea of calculating in an explicit way, namely without having to solve any linear system, a matrix operator $\bM$ that specifies the mapping between a known basis (or collection of bases) and the set of MDB-splines. The resulting matrix representation $\bN = \bM \bN_0$ provides a way to evaluate the MDB-spline basis functions, which are the elements of vector $\bN$, as a combination of easily computable functions, which are the elements of vector $\bN_0$.
This avenue was firstly pursued in \cite{TSHH2017}, which underpinning idea is to gather the continuity constraints between spline pieces in a matrix and then calculate its null space by a recursive procedure. 
The follow-up paper \cite{TSHMH2018} proves that the output of the algorithm is exactly the entire set of MDB-splines, whereas implementation details are given in  \cite{SPE2018} and a Chebyshevian extension of the construction is presented in \cite{chebMD2019}. 
In these series of papers, the vector $\bN_0$ is composed of a collection of local bases, which can be, in particular, either the Bernstein bases relative to the breakpoint intervals or conventional B-spline bases, each one relative to a sequence of intervals of equal degree. In both cases the functions in $\bN_0$ are discontinuous.

In \cite{BC2020} it is observed that the functions in $\bN$ and $\bN_0$ are related through a sequence of successive knot insertions (the same observation was made in the context of Chebyshevian splines in \cite{chebMD2019})
and that matrix $\bM$ can be computed by inverting these steps, giving rise to a process called \emph{reverse knot insertion (RKI)}. 
This realization allowed the authors of \cite{BC2020} not only to generate the matrix representation \cite{TSHH2017} in a more direct and intuitive way, but more generally to derive a matrix representation with respect to any set of functions $\bN_0$ which are related to the basis $\bN$ through the aforementioned knot-insertion structure. 
It is shown, in particular, that to minimize the number of performed operations, it is convenient to start from a basis $\bN_0$ composed of 
conventional B-spline functions connected with $C^0$ continuity.
Such functions are the MDB-spline basis of a piecewise conventional spline space, referred to as a \emph{$C^0$ MDB-spline} space, and, as such can be evaluated by known techniques.
The same paper also deals with how to generate the matrix representation when $\bN_0$ is the conventional B-spline basis having maximum (over all intervals) degree and same continuities as the basis $\bN$. In this case the procedure consists in inverting the sequence of local degree elevations connecting the target space and the maximum-degree conventional space
and is therefore called \emph{reverse degree elevation (RDE)}. 
It is also discussed how it is possible to combine successive reverse knot insertion and reverse degree elevation steps in a unique algorithm. This algorithm hence allows to construct a matrix representation in the most general case, that is under the sole assumption that the space spanned by $\bN_0$ contains the space spanned by $\bN$.

However, both methods \cite{TSHH2017} and \cite{BC2020} have a weakness, which is that they require to calculate higher-order derivatives of B-spline (or Bernstein) basis functions (the order of the derivatives to be calculated  corresponds to the maximum continuity or maximum degree to be handled).
The evaluation of these derivatives can be carried out in a stable way \cite{BUT1976}. However, not only is it a price to pay in terms of computational cost, but also, and above all, it can lead to the numerical instability of the algorithm using them. In fact, B-spline derivatives may easily become very large numbers for high differentiation order and/or very nonuniform partitions, hence the arithmetic operations carried out with them are potentially risky. 
The actual occurrence of instability phenomena was observed in \cite{BC2020}, where it is suggested to work with a \emph{compensated} version of the algorithm in order to improve on accuracy
(compensation is a standard technique, see, e.g., \cite{Higham2002}). 

The main contribution of this paper is a new, stable algorithm that provides the matrix representation of any MDB-spline basis. The algorithm exploits a suitable reformulation of the reverse knot insertion and reverse degree elevation processes, thanks to which only numerically stable operations are performed and, in particular, no derivative needs to be evaluated.
We will show how a natural way to arrive at this reformulation  is to pass through the concept of Greville absciss\ae. The Greville absciss\ae\ are defined, similarly as in the case of conventional splines, as the coefficients of the identity function in the MDB-spline basis. As is well known, they are essential in various applications ranging from approximation and interpolation to isogeometric analysis. We will show that these absciss\ae\ can be obtained by integrating the MDB-spline basis of the corresponding derivative space.

All in all, the resulting stable algorithm has the form of a triangular scheme. As such it has quadratic computational complexity (vs.\ the linear complexity of previous algorithms), nevertheless the computation time is negligible in practical situations.
An appealing feature of the approach lies in its deep relationship with the usual spline tools of knot insertion and degree elevation,  which use automatically ensures the correctness of the set of MDB-splines provided as output. This is an important difference with respect to the approach in \cite{TSHH2017}, where it is required to prove a posteriori that the generated functions are MDB-splines.

For ease of presentation and in the interest of clarity, we will develop in all details the so-called \emph{RKI Algorithm}, corresponding to the case where $\bN$ and $\bN_0$ are related by iterated reverse knot insertions. 
The circumstance in which the two basis vectors are related by degree elevation (giving rise to the \emph{RDE Algorithm}) can be addressed by similar general principles and will be discussed more briefly in the last part of the paper. 
Finally, we will illustrate how it is possible to mix RDE and RKI steps, like in \cite{BC2020}, in such a way as to be able to choose the initial basis vector  $\bN_0$ that will entail the least number of operations and thereby improve the efficiency of the computation. The latter algorithm builds on the previous two and due to space constraints we will limit ourselves to providing a quick sketch of the procedure. \\

The remainder of the paper is organized a follows. Section
\ref{sec:preliminaries} collects the necessary notions and results on
multi-degree splines and their matrix representation. Section \ref{sec:RKI_rev} presents the new algorithm, in particular showing how to compute the Greville absciss\ae, using these absciss\ae\ to reformulate the reverse knot insertion process without resorting to MDB-spline derivatives and finally introducing the triangular scheme.
Section \ref{sec:general_algo} is concerned with the computational/inplementation aspects of the procedure and presents a practical example of its application. 
The numerical stability of the method is discussed theoretically in  section
\ref{sec:stability}, while subsection \ref{sec:num_cons_RKI} proposes a series of numerical experiments which, in addition to confirming the theoretical predictions, highlight the potential inaccuracy of previous methods.
Section \ref{sec:RDE} illustrates how to derive a matrix
representation in terms of the conventional B-spline basis of maximum degree.
Conclusions are drawn in section \ref{sec:conclusion}. 

\section{Background and basic notions}
\label{sec:preliminaries}

In this section we gather the notions and results on multi-degree splines of interest for this paper. 

\subsection{Multi degree (MD) spline spaces and B-spline bases}
Throughout the paper we will deal with piecewise functions, with pieces drawn from polynomial spaces whose dimensions are allowed to change from interval to interval. Spaces of such functions are defined as follows.

\begin{defn}[Multi degree spline space] \label{def:MD-space}
Let $[a,b]$ be a closed bounded real interval, $\mathcal{X}=\{x_i\}_{i=1}^{q}$ be
  a partition of $[a,b]$ s.t.\ $a\eqqcolon x_0<x_1<\ldots <x_q<x_{q+1}\coloneqq b$ and $\bd=(d_0,\dots,d_q)$ be a vector of nonnegative integers. Let also $\KKK=(k_1,\dots,k_q)$ be a vector of nonnegative integers such that $k_i \leq\min\{d_{i-1},d_{i}\}$. 
% $k_i \leq\min\{d_{i-1},d_{i}\}$ if $d_{i-1}\neq d_{i}$ and  $k_i < d_{i}$ otherwise.
The corresponding space of \emph{multi-degree splines (MD-splines, for short)} is the set of functions
 \[
\begin{aligned}
\SSS(\PPP_{\bd},\XXX,\KKK) \coloneqq & \left\{ f \,\big|\, \mbox{there exist } p_i\in\PPP_{d_i}, i=0,\dots,q,  \mbox{ such that:} \right. \\
&
\begin{minipage}[b]{0.75\linewidth}
\begin{enumerate}[label=\roman*)]
 \item $f(x)=p_i(x)$ for $x\in [x_i,x_{i+1}], \, i=0,\dots,q$;\\[-1.5ex]
 \item $D^\ell p_{i-1}(x_i)=D^\ell p_{i}(x_i)$ for $\ell=0,\dots,k_i, \, i=1,\dots,q \left. \vphantom{\big|} \right\},$
\end{enumerate}
\end{minipage}
\end{aligned}
\]
where $\PPP_{d}$ is the space of algebraic polynomials of degree at most $d$.
\end{defn}

Note that the above definition returns a conventional spline space in the particular case where $\bd$ is a vector with constant entries, that is $d_0=\dots=d_q$, and therefore conventional splines can be seen as a subclass of MD-splines.

A space $\SSS\coloneqq \SSS(\PPP_{\bd},\KKK,\bDelta)$ has dimension
$
K \coloneqq \dim(\SSS) =d_0+1+\sum_{i=1}^q(d_i-k_i). 
$

Moreover, as shown in \cite{BCM2017}, it possesses 
a B-spline-type basis, dubbed \emph{MDB-spline basis} (or \emph{MDB-splines}), sharing many properties with conventional B-splines. 
For defining this basis we shall introduce two partitions $\bs$ and $\bt$ as follows: 
\begin{equation}\label{eq:s}
\bs \coloneqq\{s_j\}_{j=1}^K\coloneqq  \{\underbrace{a,\dots,a}_{d_0+1 \text{ times}}, \underbrace{x_1,\dots,x_1}_{d_1-k_1 \text{ times}}, \dots, \underbrace{x_q,\dots,x_q}_{d_q-k_q \text{ times}} \} ,
\end{equation}
and
\begin{equation}\label{eq:t}
\bt \coloneqq\{t_j\}_{j=1}^K\coloneqq \{ \underbrace{x_1,\dots,x_1}_{d_0-k_1 \text{ times}}, \dots, \underbrace{x_q,\dots,x_q}_{d_{q-1}-k_q \text{ times}}, \underbrace{b,\dots,b}_{d_{q}+1 \text{ times}} \}.
\end{equation}
We call $\bs$ and $\bt$ the \emph{left} and \emph{right extended partition}, respectively, associated with the MD-spline space.

Denoted $m\coloneqq\max_i \{d_i\}$, the MDB-spline basis functions $N_{1,m},\dots,N_{K,m}$ are defined recursively over $\bs$ and $\bt$. The recurrence consists in constructing, for $n=0,\ldots,m$, a sequence of functions $N_{i,n}$, $i=m+1-n,\ldots,K$, where $N_{i,n}$ is supported on $[s_i,t_{i-m+n}]$ and is determined on each nontrivial interval 
$[x_j,x_{j+1}) \subset [s_i,t_{i-m+n}]$ 
by the following integral relation \cite{BCM2017}: 
\begin{equation}\label{def:int_rec}
\displaystyle
N_{i,n}(x)\coloneqq \left \{
\begin{array}{ll}
1, \qquad x_j \leq x < x_{j+1} & n=m-d_j, \vspace{0.2cm}\\
\int_{-\infty}^x\left[\delta_{i,n-1}N_{i,n-1}(u)-\delta_{i+1,n-1}N_{i+1,n-1}(u)\right]du,  & n>m-d_j, \vspace{0.2cm}\\
0,  & otherwise,
\end{array}
\right.
\end{equation}
where
\begin{equation}\label{def:int_rec2}
\delta_{i,n}\coloneqq\left( \int_{-\infty}^{+\infty} N_{i,n}(x) dx \right)^{-1}.
\end{equation}

In \eqref{def:int_rec} we assume undefined $N_{i,n}$ functions to be zero and, in this case, we set

\begin{equation} \label{cond_N=0}
\int_{-\infty}^x\delta_{i,n}N_{i,n}(u)du\coloneqq \left\{
\begin{array}{ll}
0, & x< s_i,\\
1, & x \geq s_i.\\
\end{array}
\right.
\end{equation}

The $K$ functions generated by the above integral formulation 
possess analogous characterizing properties as conventional B-splines, that is: 
\begin{enumerate}[label=\roman*)]
\setlength{\itemsep}{4pt}
\item \label{propty:BS1} \emph{Compact support}: $N_{i,m}(x)=0$ for $x \notin [s_i, t_i]$;
\item \label{propty:BS2} \emph{Positivity}:  $N_{i,m}(x) > 0$ for $x \in (s_i,t_i)$;
\item \label{propty:BS3} \emph{End point property}: $N_{i,m}$ vanishes exactly $d_{ps_i}-\max \{ j\geq 0 \ | \ s_i=s_{i+j} \} \;$ times at $s_i$ and
  $d_{pt_i-1}-\max \{ j \geq 0 \ | \ t_{i-j}=t_i \} \;$ times at $t_i$, 
where $ps_i$ and $pt_i$ are \st $x_{ps_i}=s_i$ and $x_{pt_i}=t_i$;
\item \label{propty:BS4} \emph{Partition of unity}: $\displaystyle \sum_i N_{i,m}(x) = 1$, $\forall x \in [a,b]$.
\end{enumerate}

Moreover, the above properties \ref{propty:BS1}, \ref{propty:BS3} and \ref{propty:BS4} also warrant the uniqueness of the MDB-spline basis (see \cite{BCM2017} for further details). We refer to previous papers for illustrations of MDB-spline basis functions, see \eg \cite{BCM2017,BC2020,TSHH2017,SPE2018}. 

\subsection{$C^0$ Multi-degree splines}\label{sec:C0}
In the remainder of the paper we will often rely on MD-spline spaces whose elements are conventional spline functions connected with $C^0$ continuity, which we refer to as \emph{$C^0$ MD-splines}.
 
These spaces are convenient tools to work with, in that 
well-established methods can easily be adapted to deal with them.  
In particular a generalization of Cox de-Boor recurrence
formula \cite{Cox1972,dB1972}, the main method for evaluating conventional B-splines, 
is given in \cite[Proposition 4]{BC2020} 
along with a recurrence relation for the computation of derivatives (\cite[Proposition 5]{BC2020}). 

The integrals of $C^0$ MDB-splines can as well be efficiently evaluated resorting to existing results. 
To this aim, it suffices to observe that a $C^0$ MD-spline $N_{i,m}$ can be of only one of the following two types: 
either it is a conventional B-spline, which means that within its support each piece has same degree, or it consists of only two non-trivial pieces of different degrees connected with $C^0$ continuity. 
In both cases, recalling that the integral of a conventional degree-$d$ B-spline basis function $N_{i,d}$ is equal to the ratio between the  support width of $N_{i,d}$ and $d+1$, its integral is easily computed as:
\begin{equation} \label{eq:intC0}
\int_{x_{ps_i}}^{x_{pt_i}}
  N_{i,m} \; dx = \sum_{j=ps_i}^{pt_i-1} \frac{x_{j+1}-x_j}{d_j+1},
\end{equation}
with $ps_i$ and $pt_i$ defined as in \ref{propty:BS3}.

\begin{defn}[Associated $C^0$ MD-spline space]\label{def:asC0}
The  $C^0$ MD-spline space  associated with a
multi-degree spline space $\SSS(\PPP_{\bd},\XXX,\KKK)$ as in Definition \ref{def:MD-space} is the unique
MD-spline space $\SSS(\PPP_{\bd},\XXX,\KKK_0)$ having same breakpoint sequence
  and degree vector as $\SSS(\PPP_{\bd},\XXX,\KKK)$ and vector of continuities
  $\KKK_0=(k^0_1,\dots,k^0_q)$ such that $k_i^0=0$ if $d_{i-1}\neq d_i$ and $k_i^0=k_i$ otherwise.
\end{defn}

\subsection{Matrix representation}
If we take an MD-spline space (including, possibly, a conventional spline space) $\SSS_0$ such that 
$\SSS\subset \SSS_0$, we can write the relationship between the respective MDB-spline bases in the form 
\begin{equation} \label{eq:M}
\bN = \bM \, \bN_0, 
\end{equation}
where  $\bN \coloneqq \left(N_1,\dots,N_{K}\right)$ and
$\bN_{0}=\left(N_{1}^0,\dots,N_{K_0}^0\right)$ are the vectors
containing the basis functions of $\SSS$ and $\SSS_0$, respectively, and
$\bM$ is a linear operator of size $K\times K_0$.
We refer to the above as the \emph{matrix representation of $\bN$ relative to $\bN_0$}.  
Knowing $\bM$ and $\bN_0$, one can thus use \eqref{eq:M} to evaluate $\bN$. 

One way to compute matrix $\bM$ is to choose $\SSS_0$ to be the $C^0$ MD-spline space associated with $\SSS$
and rely on iterated application of the following result, referred to as \emph{Reverse Knot Insertion}, RKI for short \cite[Proposition 6]{BC2020}.

\begin{prop}[Reverse knot insertion]\label{prop:RKI}
Let $\SSS\equiv\SSS(\PPP_{\bd},\XXX,\KKK)$ and
  $\widehat{\SSS}\equiv\SSS(\PPP_{\bd},\XXX,\widehat{\KKK})$ be MD-spline spaces with same breakpoint sequence and degrees. Let also the respective continuity vectors be $\KKK=(k_1,\dots,k_j,\dots,k_{q})$ and $\widehat{\KKK}=(k_1,\dots,k_j-1,\dots,k_q)$, for  $j\in \{1,\dots,q\}$. 
Then, $\SSS\subset\widehat{\SSS}$ and the corresponding MDB-spline bases $\{N_{i,m}\}_{i=1}^K$ and $\{\hat{N}_{i,m}\}_{i=1}^{K+1}$
are related through
\begin{equation} \label{eq:kiN}
N_{i,m} = \alpha_{i} \hat{N}_{i,m} + (1-\alpha_{i+1}) \hat{N}_{i+1,m}, \quad i=1,\ldots,K,
\end{equation}
where, being $\ell$ the index of the element of $\bs$ such that  $s_{\ell}\leq x_j < \min(s_{\ell+1},b)$, the coefficients $\alpha_i$ are such that 
\begin{equation} \label{eq:alpha_ki} 
\alpha_i \left\{\begin{array}{ll}
  =1, & i=1, \ldots, \ell-d_j, 
  \\[1ex]
\in \  ]0,1[\ , & i=\ell-d_j+1, \ldots, \ell-d_j+k_j, \\[1ex]
=0, & i=\ell-d_j+k_j+1, \ldots, K+1.
\end{array} \right.
\end{equation}
Moreover, the coefficients $\alpha_i$, $i=\ell-d_j+1, \ldots,\ell-d_j+{k_j}$, can be computed from the MDB-splines $\{\hat{N}_{i,m}\}_{i=1}^{K+1}$ 
by the relation
\begin{equation} \label{eq:alpha2} 
  \alpha_{i} = 1+\alpha_{i-1} \frac{D_{-}^{k_j} \hat N_{i-1,m}|_{x_j}
    - D_{+}^{k_j} \hat N_{i-1,m}|_{x_j}}
    {D_{-}^{k_j} \hat N_{i,m}|_{x_j} - D_{+}^{k_j} \hat N_{i,m}|_{x_j}}.
  \end{equation}
\end{prop}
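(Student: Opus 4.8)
The plan is to establish the three assertions in turn: the inclusion $\SSS\subset\widehat{\SSS}$ together with the dimension count, the knot-insertion identity \eqref{eq:kiN} with the coefficient pattern \eqref{eq:alpha_ki}, and the recursive formula \eqref{eq:alpha2}. The inclusion is immediate: the continuity conditions defining $\widehat{\SSS}$ are exactly those defining $\SSS$ with the single requirement $D^{k_j}p_{j-1}(x_j)=D^{k_j}p_j(x_j)$ dropped, so $\SSS\subseteq\widehat{\SSS}$; the dimension formula recalled after Definition \ref{def:MD-space} then gives $\dim\widehat{\SSS}=\dim\SSS+1=K+1$, whence $\widehat{\SSS}$ carries $K+1$ MDB-splines $\hat N_{i,m}$. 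I would next record how the extended partitions change: lowering $k_j$ by one increases by one the multiplicity of $x_j$ both in the left extended partition $\bs$ (see \eqref{eq:s}) and in the right extended partition $\bt$ (see \eqref{eq:t}), so $\widehat{\bs}$ and $\widehat{\bt}$ are obtained from $\bs$ and $\bt$ by inserting one extra copy of $x_j$. From this one reads off the precise relations between the supports $[s_i,t_i]$ and $[\hat s_i,\hat t_i]$ — they coincide to the left of the respective insertion points and are shifted by one index to the right — which lays the ground for the index ranges appearing in \eqref{eq:alpha_ki}.

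To obtain \eqref{eq:kiN} I would expand each $N_{i,m}$ in the basis of $\widehat{\SSS}$, writing $N_{i,m}=\sum_r c_{i,r}\hat N_{r,m}$, and localize. On any breakpoint interval $[x_a,x_{a+1}]$ the $\hat N_{r,m}$ that do not vanish there restrict to a basis of $\PPP_{d_a}$, hence are linearly independent on that interval; consequently, if $N_{i,m}\equiv 0$ on $[x_a,x_{a+1}]$ then $c_{i,r}=0$ for every $r$ with $[x_a,x_{a+1}]\subseteq\supp\hat N_{r,m}$. Combined with property \ref{propty:BS1}, $\supp N_{i,m}=[s_i,t_i]$, this forces $c_{i,r}=0$ whenever $\supp\hat N_{r,m}\not\subseteq[s_i,t_i]$, and the support relations from the first step leave exactly the two indices $r=i$ and $r=i+1$. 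Setting $\alpha_i\coloneqq c_{i,i}$ and substituting the two-term expansions into the two partition-of-unity identities $\sum_i N_{i,m}=1=\sum_r\hat N_{r,m}$ (property \ref{propty:BS4}), I match the coefficient of each $\hat N_{r,m}$ and get $c_{i,i+1}=1-\alpha_{i+1}$, which is \eqref{eq:kiN}, together with $\alpha_1=1$ and the convention $\alpha_{K+1}=0$.

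The coefficient pattern \eqref{eq:alpha_ki} then requires a finer look. Part of the vanishing (and unit) values of $\alpha_i$ follows already from the support relations of the first step — for $i$ far enough below (resp.\ above) the affected block one of the two terms in \eqref{eq:kiN} is forced to be zero — but the full ranges $\alpha_i=1$ for $i\le\ell-d_j$ and $\alpha_i=0$ for $i\ge\ell-d_j+k_j+1$ also invoke the endpoint-vanishing property \ref{propty:BS3}: where $s_i=x_j$ (resp.\ $t_i=x_j$), the extra copy of $x_j$ in $\widehat{\bs}$ (resp.\ $\widehat{\bt}$) raises the vanishing order of $\hat N_{i,m}$ at $x_j$ relative to $N_{i,m}$, which via \eqref{eq:kiN} is compatible only with $\alpha_i$ taking the corresponding boundary value. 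For the $k_j$ intermediate indices the strict inequalities $\alpha_i\in\,]0,1[$ follow from positivity (property \ref{propty:BS2}), since a vanishing $\alpha_i$ or $\alpha_{i+1}$ would make $N_{i,m}$ vanish on a one-sided portion of $(s_i,t_i)$. Finally, \eqref{eq:alpha2} comes from a smoothness comparison at $x_j$: by \eqref{eq:kiN}, $N_{i-1,m}=\alpha_{i-1}\hat N_{i-1,m}+(1-\alpha_i)\hat N_{i,m}$, and since $N_{i-1,m}\in\SSS$ is $C^{k_j}$ at $x_j$ whereas the $\hat N_{r,m}\in\widehat{\SSS}$ are only $C^{k_j-1}$ there, equating the jumps of the $k_j$-th derivative across $x_j$ yields
\[
0=\alpha_{i-1}\bigl(D_{-}^{k_j}\hat N_{i-1,m}|_{x_j}-D_{+}^{k_j}\hat N_{i-1,m}|_{x_j}\bigr)+(1-\alpha_i)\bigl(D_{-}^{k_j}\hat N_{i,m}|_{x_j}-D_{+}^{k_j}\hat N_{i,m}|_{x_j}\bigr),
\]
and solving for $\alpha_i$ reproduces \eqref{eq:alpha2}; the denominator is nonzero because $x_j$ lies in the interior of $\supp\hat N_{i,m}$, where $\hat N_{i,m}$ attains exactly the smoothness $C^{k_j-1}$ and no more. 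Seeded by $\alpha_{\ell-d_j}=1$, this recursion then produces the intermediate coefficients.

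The step I expect to be the main obstacle is the index bookkeeping underpinning the first and third paragraphs: one must verify carefully that inserting the extra copy of $x_j$ leaves exactly two of the $\hat N_{r,m}$ with support inside each $[s_i,t_i]$ (neither more nor fewer), that the block of genuinely intermediate indices is precisely $i=\ell-d_j+1,\dots,\ell-d_j+k_j$, and — using property \ref{propty:BS3} — that the boundary values of the $\alpha_i$ extend over the full ranges claimed in \eqref{eq:alpha_ki}, including the degenerate cases of coincident knots. A subsidiary point is the non-degeneracy of the $k_j$-th derivative jumps of $\hat N_{i-1,m}$ and $\hat N_{i,m}$ at $x_j$ that makes the ratio in \eqref{eq:alpha2} well defined, which again rests on the end-point/knot-multiplicity behaviour of MDB-splines. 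Once these combinatorial facts are in place, the partition-of-unity telescoping and the derivative-jump identity are routine.
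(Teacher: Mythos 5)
This proposition is stated in the paper without proof: it is imported verbatim from \cite[Proposition~6]{BC2020}, so there is no in-paper argument to compare your attempt against. Judged on its own terms, your outline is sound and follows the standard knot-insertion template: the inclusion $\SSS\subset\widehat{\SSS}$ and the dimension count are immediate from Definition~\ref{def:MD-space}; the two-term relation \eqref{eq:kiN} follows from local linear independence of the $\hat N_{r,m}$ on each breakpoint interval together with the support property \ref{propty:BS1} and the partition of unity \ref{propty:BS4}; and your derivation of \eqref{eq:alpha2} by annihilating the jump of $D^{k_j}$ at $x_j$ in $N_{i-1,m}=\alpha_{i-1}\hat N_{i-1,m}+(1-\alpha_i)\hat N_{i,m}$ is exactly the computation that yields the stated recursion, correctly seeded by $\alpha_{\ell-d_j}=1$.

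Two points that you flag as obstacles are indeed where the real work lies, and they deserve to be spelled out rather than deferred. First, the support argument by itself only shows that \emph{at most} two coefficients survive; identifying them as $r=i$ and $r=i+1$ uniformly in $i$ requires fixing the index shift induced by inserting the extra copy of $x_j$ into $\bs$ and $\bt$ (cf.\ \eqref{eq:s}, \eqref{eq:t}), and for indices whose support lies entirely on one side of $x_j$ only one term is genuinely present, the other coefficient being forced to its boundary value $0$ or $1$ — this is what produces the outer ranges in \eqref{eq:alpha_ki}, with property \ref{propty:BS3} handling the functions whose support merely touches $x_j$. Second, the well-definedness of \eqref{eq:alpha2} rests on the nonvanishing of $D_{-}^{k_j}\hat N_{i,m}|_{x_j}-D_{+}^{k_j}\hat N_{i,m}|_{x_j}$ for $i$ in the intermediate block; your appeal to the fact that an MDB-spline attains exactly the prescribed smoothness at interior knots of its support is the right justification, but it is a property that itself needs citing or proving (it follows, e.g., from the uniqueness of the MDB-spline basis discussed after properties \ref{propty:BS1}--\ref{propty:BS4}, since a higher-order smooth $\hat N_{i,m}$ would already belong to $\SSS$ and violate the dimension count). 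With these two points made precise, the argument is complete.
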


\begin{rem}\label{rem:KIvsRKI}
Knot insertion is a well-established tool for conventional splines and was generalized to multi-degree splines in 
\cite{BCM2017}. 
Classically, using knot insertion we pass from the representation in a space $\SSS$ to that in a space $\widehat \SSS $ in which the continuity at a breakpoint is decreased. The word \emph{reverse} refers to the fact that we go from space $\widehat \SSS$ to $\SSS$ by increasing the continuity at a breakpoint.
Moreover, under the assumptions of Proposition \ref{prop:RKI}, classical knot insertion would mean to compute the 
coefficients \eqref{eq:alpha_ki} knowing the MDB-spline bases of both spaces $\SSS$ and $\widehat{\SSS}$.
Conversely, in reverse knot insertion the coefficients \eqref{eq:alpha2} only depend on the basis 
$\{\hat N_{i,m}\}$ of $\widehat \SSS$ and can hence be used to compute the basis $\{N_{i,m}\}$ of $\SSS$.
\end{rem}

To derive the matrix representation \eqref{eq:M}, take a sequence of MD-spline spaces, all defined on $[a,b]$, sharing same breakpoint sequence $\XXX$ and degree vector $\bd$, such that
\begin{equation} \label{eq:nested}
\SSS\coloneqq \SSS_g \subset \dots \subset \SSS_1 \subset \SSS_0.
\end{equation}
Furthermore suppose that each space $\SSS_r$, $r=1,\dots,g$, is obtained from 
$\SSS_{r-1}$ increasing by one the continuity at a breakpoint in such a way that $K_r\coloneqq\dim(\SSS_r) =\dim(\SSS_{r-1})-1$.
Relation \eqref{eq:kiN} to pass from $\SSS_{r-1}$ to $\SSS_{r}$ can be written
in the matrix form $\bN_r = \bA_r \, \bN_{r-1}$, with a bidiagonal matrix $\bA_r$ of size $K_r\times (K_r+1)$ having on each row the coefficients $\alpha_i^r$  and 
$(1-\alpha_{i+1}^r)$ relating the bases $\bN_r$ and $\bN_{r-1}$ as in \eqref{eq:kiN}, \eqref{eq:alpha_ki}.
Iterating the procedure for as many multiplicities and knots as needed, one
obtains the matrix M in \eqref{eq:M}, which is the product of all  matrices $\bA_r$, that is $\bM=\bA_g\cdots\bA_1$. 

Being based on repeated reverse knot insertions, the above procedure for the
construction of the representation matrix in \eqref{eq:M} is called  \emph{RKI Algorithm} \cite{BC2020}. 
In order for the RKI Algorithm -- that is the matrix representation it produces -- to be an efficient tool for evaluating the target MDB-spline basis $\bN$, the vector $\bN_0$ should be known
(or more precisely computable through established methods).
This is the reason why we have chosen it to contain the basis functions of the $C^0$ MD-spline space associated with 
$\SSS$. The described procedure, however, simply requires that $\SSS$ can be
generated from $\SSS_0$ by repeated reverse knot insertions. It is therefore
possible, and sometimes desired, to choose $\SSS_0$ in a different way, see
Remark \ref{rem:H} for further details.

It shall be noted that, according to equation \eqref{eq:alpha2}, the described 
method requires calculating the derivatives of MDB-spline
basis functions at each breakpoint $x_j$ to be processed, up to the target continuity $k_j$. 
As pointed out in \cite{BC2020}, 
B-spline derivatives, and MDB-spline derivatives likewise, may become very large numbers when the order of differentitation increases, causing potential numerical issues for very large degrees and/or nonuniform partitions.
To circumvent this criticality, in that paper it is proposed to work with a \emph{compensated} version of the algorithm \cite{Higham2002}.
Although this strategy is able to improve the accuracy of the results, 
the presence of high order derivatives remains an intrinsic feature of the existing algorithms which would be far preferable to avoid.

\begin{rem}\label{rem:H}
Our assumption that $\SSS_0$ be a  $C^0$ MD-spline space is merely dictated by simplicity and conciseness of presentation.
However, with a view to choosing $\SSS_0$ in such a way that its basis can be evaluated by known methods, one can as well consider alternative initial spaces, since the general algorithm proposed in \cite{BC2020} is based on the sole requirement that $\SSS_0$ be defined on $[a,b]$, have same breakpoint sequence as $\SSS$ and $\SSS\subset \SSS_0$.
In particular we may want to take $\SSS_0$ to be a \emph{piecewise} space and its basis vector $\bN_0$ to be the collection of the Bernstein bases relative to the breakpoint intervals, or alternatively a piecewise conventional B-spline basis on a sequence of abutting intervals with equal degree (in the latter situation, the generated matrix $\bM$ is the H-operator proposed in \cite{TSHH2017}). The ideas presented in this work can easily be adapted to both of these situations.

Lastly, we may want $\SSS_0$ to be a conventional spline space of
  degree $m\coloneqq\max_i\{d_i\}$, whose basis $\bN_0$ will be a conventional
  degree-$m$ B-spline basis. This situation cannot be addressed by reverse knot
  insertions, but by a similar approach, as well described in \cite{BC2020},
  based on \emph{reverse degree elevation} and will briefly be discussed in
  section \ref{sec:RDE}.

\end{rem}

\section{The novel RKI algorithm}
\label{sec:RKI_rev}
In the following we present the general ideas our new algorithm is based on. 
To formalize our method we will need to use spaces spanned by derivatives of MD-splines, that are defined as follows.

\begin{defn}[Spline space of derivatives]
\label{def:der_space}
We denote by $D^{r}\SSS\coloneqq\{D_+^{r}f \;  | \; \allowbreak f\in \SSS\}$
the function space whose elements are $r$th right derivatives of functions in a multi degree spline space $\SSS$. There follows that $D^{r}\SSS$ has dimension $K-r$, where $K$ is the dimension of $\SSS$. 
\end{defn}

Throughout the paper, for brevity, we simply write $D\SSS$ in place of $D^{1}\SSS$.
We will be concerned with spaces $D^{r}\SSS$, with $r$
ranging from $0$ up to $\max_i \{d_i\}$, the latter corresponding to the number of levels in the recurrence \eqref{def:int_rec}. 
Therefore $D^{r}\SSS$ may contain functions discontinuous at breakpoints (the right derivative in the definition accounting for possible discontinuities) and, rather than a \qtext{single} MD-spline space, it should be regarded as \qtext{piecewise} space, whose functions are MD-splines defined on abutting intervals and possibly
discontinuous at breakpoints. 
In particular, we associate with $D^{r}\SSS$ the vectors of degrees
$(d_0-r,\dots,d_q-r)$ and continuities $(k_1-r,\dots,k_q-r)$, which may be
negative integers, in such a way that $K^{(r)}\coloneqq \dim(D^{r}\SSS)=K-r=d_0-r+1+\sum_{i=1}^q(d_i-k_i)$, with the convention that  the
restriction of $D^{r}\SSS$ to $[x_i,x_{i+1}]$ be the zero function in case
$d_i-r<0$. 
We can as well define a piecewise MDB-spline basis $N_1^{(r)},\dots,\allowbreak
N_{K-r}^{(r)}$ spanning $D^r\SSS$, which will be relative to the left and right
extended partitions $\bs^{(r)}$ and $\bt^{(r)}$ obtained by replacing $d_i$ and
$k_i$ with $d_i-r$ and $k_i-r$ in \eqref{eq:s}, \eqref{eq:t}. These correspond to the
functions generated by the the integral recurrence relation \eqref{def:int_rec} for $n=m-r$ and defined on the  partitions $\bs$ and $\bt$ relative to $\SSS$ in such a way that:  
\begin{equation} \label{eq:basisid}
N_1^{(r)}(x)=N_{r+1,m-r}(x),\quad \dots \quad ,N_{K-r}^{(r)}(x)=N_{K,m-r}(x), \quad \forall x \in [a,b].
\end{equation}

Furthermore, we can generalize to MD-spline spaces the classical notion of \emph{Greville absciss\ae}, that are 
the coefficients of the expansion of the function $f(x)=x$ in the B-spline basis of a conventional spline space containing linear functions.
For a multi-degree spline space $D^r\SSS$ the Greville absciss\ae\, relative to the basis 
$\{ N_{i,m}^{(r)} \}$ are defined to be the coefficients $\xi_i^{(r)}$,
$i=1,\dots,K^{(r)}$, such that $\sum_{i=1}^{K^{(r)}} \xi_i^{(r)}
N_{i,m}^{(r)}(x)=x$, $\forall x\in [x_j,x_{j+1}]$,  $j=0,\ldots,q$, such that $d_j\geq1$.

The following proposition relates the Greville absciss\ae\ to the MDB-spline basis of the derivative space $D\SSS$.
A similar result was proved in \cite[Theorem 15]{CaMaPe2016} for Chebyshevian splines 
with all section spaces of the same dimension. 

\begin{prop}[Computation of Greville abscissae]
\label{prop:Greville}
Let $\SSS$ be a $K$-dimensio- nal MD-spline space. Then the Greville abscissa\ae \ $\xi_1,\dots,\xi_K$ with respect to the MDB-spline basis $N_1\dots,\allowbreak N_K$ 
of $\SSS$ are given by:
\begin{equation} \label{eq:Greville}
\xi_i=a+\sum_{j=1}^{i-1} \int_a^b N_j^{(1)}(x) dx,  \quad i=1,\ldots,K,
\end{equation}
where $N_j^{(1)}$, $j=1,\dots,K-1$, is the MDB-spline basis of the derivative space $D\SSS$. 
\end{prop}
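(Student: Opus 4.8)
The plan is to exploit the fundamental theorem of calculus together with the recurrence structure linking $\SSS$ and its derivative space $D\SSS$. First I would recall that, by Definition~\ref{def:der_space} and the identification \eqref{eq:basisid} specialized to $r=1$, the functions $N_j^{(1)}$, $j=1,\dots,K-1$, form the MDB-spline basis of $D\SSS$ and are obtained from the integral recurrence \eqref{def:int_rec} at level $n=m-1$; in particular the MDB-splines $N_i$ of $\SSS$ are recovered by one further integration step, namely $N_{i} = \int_{-\infty}^{x}\bigl[\delta_{i}N_{i}^{(1)}(u) - \delta_{i+1}N_{i+1}^{(1)}(u)\bigr]\,du$ with $\delta_i = \bigl(\int_{\RR} N_i^{(1)}\bigr)^{-1}$, together with the boundary conventions \eqref{cond_N=0}. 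Differentiating on each nontrivial interval gives $D_+ N_i = \delta_i N_i^{(1)} - \delta_{i+1} N_{i+1}^{(1)}$, which is the key identity.

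Next I would establish that the right-hand side of \eqref{eq:Greville} does define the expansion of the identity function. Set $g(x) \coloneqq \sum_{i=1}^{K} \xi_i N_i(x)$ with $\xi_i$ as in \eqref{eq:Greville}. Using $D_+ N_i = \delta_i N_i^{(1)} - \delta_{i+1} N_{i+1}^{(1)}$ and the partition-of-unity property \ref{propty:BS4} of the $N_i^{(1)}$ (which sum to $1$ on $[a,b]$), an Abel summation rearranges $D_+ g = \sum_i \xi_i D_+ N_i$ into $\sum_{j=1}^{K-1} (\xi_{j+1}-\xi_j)\delta_{j+1} N_{j+1}^{(1)}$ (the boundary terms vanish because $N_1^{(1)}$ and $N_K^{(1)}$ have the appropriate coefficients cancel, or via \eqref{cond_N=0}). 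Since $\xi_{j+1}-\xi_j = \int_a^b N_j^{(1)} = \delta_{j}^{-1}$ by construction... wait, I need to be careful with index alignment: $\xi_{i+1}-\xi_i = \int_a^b N_i^{(1)}$, so $(\xi_{j+1}-\xi_j)\delta_{j+1}$ should instead be grouped so that the coefficient of $N_i^{(1)}$ becomes $(\xi_{i+1}-\xi_i)\delta_i \cdot(\text{something})$; the clean way is to write $D_+ g = \sum_{i=2}^{K-1}\bigl[\xi_i \delta_i - \xi_{i-1}\delta_i - \xi_{i+1}\delta_{i+1}+\xi_i\delta_{i+1}\bigr]$... — more simply, telescoping $\sum_i \xi_i(\delta_i N_i^{(1)} - \delta_{i+1}N_{i+1}^{(1)}) = \sum_{j}(\xi_j - \xi_{j+1})\delta_{j+1}N_{j+1}^{(1)} = -\sum_j N_j^{(1)}\cdot\frac{\xi_{j+1}-\xi_j}{\int_a^b N_j^{(1)}}$ — hmm, the sign and the $\delta$ placement need one honest line of bookkeeping. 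The upshot is that the coefficients collapse to give $D_+ g \equiv 1$ on every interval with $d_j\geq 1$, because $\xi_{i+1}-\xi_i$ was chosen to be exactly $\int_a^b N_i^{(1)} = 1/\delta_i$.

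Then I would integrate: since $D_+ g = 1$ on each nontrivial piece, $g(x) = x + c_j$ piecewise; the initial condition $g(a) = \xi_1 \cdot N_1(a) + \dots = \xi_1 = a$ (using the endpoint property \ref{propty:BS3}, $N_1(a)=1$ and $N_i(a)=0$ for $i>1$) pins down $c_0 = 0$, and continuity of $g$ across breakpoints (each $N_i\in\SSS$, and on intervals with $d_j\geq 1$ the MDB-splines are at least $C^0$ there) propagates $c_j=0$ for all $j$ with $d_j\geq 1$. Hence $g(x)=x$ wherever the Greville abscissae are defined, which by the uniqueness of such an expansion (guaranteed by linear independence of the $N_i$) identifies the $\xi_i$ of \eqref{eq:Greville} as the Greville abscissae.

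\textbf{Main obstacle.} I expect the delicate point to be the index bookkeeping in the telescoping step combined with the boundary conventions \eqref{cond_N=0}: one must verify that the "undefined'' end MDB-splines $N_1^{(1)}$, $N_K^{(1)}$ contribute correctly so that no spurious boundary term survives, and that the argument degrades gracefully on intervals where $d_j = 0$ (there $D^1\SSS$ restricts to the zero function, $N_i$ is piecewise constant, and the identity function is simply not representable — so the claim is correctly stated only for $x$ in pieces with $d_j\geq 1$). A secondary subtlety is justifying that differentiation commutes with the finite sum across breakpoints in the right-derivative sense, i.e.\ that $D_+ g = \sum_i \xi_i D_+ N_i$ holds pointwise including at breakpoints; this follows from \eqref{def:int_rec} but deserves an explicit remark.
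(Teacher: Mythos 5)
Your argument is correct and is essentially the paper's proof run one derivative up: the paper Abel-sums $\sum_i \xi_i N_{i,m}$ directly and recognizes the tail sums $\sum_{\ell>i} N_{\ell,m}$ as $\delta_{i+1,m-1}\int_a^x N_i^{(1)}$ via the integral recurrence, whereas you differentiate, telescope, and integrate back — the same two facts ($\xi_{i+1}-\xi_i=\int_a^b N_i^{(1)}=\delta_{i+1,m-1}^{-1}$ and partition of unity of the $N_i^{(1)}$) carry both versions. The index bookkeeping you left open does close, with a plus sign: by \eqref{eq:basisid} one has $N_{i,m-1}=N_{i-1}^{(1)}$, hence $D_+N_{i,m}=\delta_{i,m-1}N_{i-1}^{(1)}-\delta_{i+1,m-1}N_{i}^{(1)}$, and the telescoped sum becomes $\sum_{i=2}^{K}(\xi_i-\xi_{i-1})\,\delta_{i,m-1}N_{i,m-1}=\sum_{j=1}^{K-1}N_j^{(1)}=1$ on every interval with $d_j\geq 1$, as you claimed.
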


\begin{proof}
The properties of the MDB-spline basis $\{N_j\}$ entail that $\xi_1=a$ and $\xi_K=b$. Moreover, from the partition of unity property and 
Abel's lemma we obtain: 
\[
\begin{aligned}
\sum_{i=1}^K \xi_i N_{i,m}(x) &= \xi_1\sum_{i=1}^K N_{i,m}(x)+\sum_{i=1}^{K-1} (\xi_{i+1}-\xi_i)\sum_{\ell=i+1}^K  N_{\ell,m}(x)\\&=a+\sum_{i=1}^{K-1} (\xi_{i+1}-\xi_i)\sum_{\ell=i+1}^K  N_{\ell,m}(x).
\end{aligned}
\]
Using relations \eqref{def:int_rec2} with $n=m-1$, \eqref{eq:basisid} with $r=1$ and \eqref{eq:Greville}:
\[
\delta_{i+1,m-1}^{-1}=\int_{-\infty}^{+\infty} N_{i+1,m-1}(u)du=\int_a^b N_{i}^{(1)}(u)du=\xi_{i+1}-\xi_i,
\]
By the recurrence definition \eqref{def:int_rec}:

\[
\sum_{\ell=i+1}^K  N_{\ell,m}(x)=\int_{-\infty}^x \delta_{i+1,m-1}N_{i+1,m-1}(u) du=\delta_{i+1,m-1}\int_{a}^x N_{i}^{(1)}(u) du.
\]
From the partition of unity property of the functions $N_{i}^{(1)}(x)$, $i=1,\dots,K-1$ , we thus obtain 
\[
\sum_{i=1}^K \xi_i N_{i,m}(x) = a+\sum_{i=1}^{K-1} \int_{a}^x N_{i}^{(1)}(u) du=a+\int_a^x du=x,
\]
which implies that $\xi_1,\dots,\xi_K$ are the Greville absciss\ae\ with respect to the basis $N_1\dots,N_K$ and concludes the proof.
Note that \eqref{eq:Greville} guarantees that the Greville absciss\ae\, form an increasing sequence. 
\end{proof}

The following result provides an alternative way to calculate the coefficients of reverse knot insertion, which, unlike previous methods \cite[Proposition 6]{BC2020}, 
 does not involve differentiating the MDB-spline basis.
\begin{prop}[Reverse knot insertion by Greville absciss\ae]
\label{prop:RKI_greville}
Under the same setting and assumptions of Proposition \ref{prop:RKI}, denoted by $\xi_i$ and $\hat \xi_i$ the Greville abscissae of $\SSS$ and
  $\widehat{\SSS}$, respectively,  the coefficients $\alpha_i$ in \eqref{eq:alpha_ki} can be computed as follows: 
\begin{equation} \label{eq:KI_rev} 
  \alpha_{i} = \frac{\hat \xi_i - \xi_{i-1}}{\xi_i - \xi_{i-1}}, \quad i=\ell-d_j+1, \ldots,
\ell-d_j+{k_j}.
  \end{equation}
\end{prop}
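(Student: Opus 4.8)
The plan is to derive the formula for $\alpha_i$ by comparing the two identity-function expansions, in $\SSS$ and in $\widehat{\SSS}$, using the knot-insertion relation \eqref{eq:kiN} together with the defining property of the Greville absciss\ae. First I would write the identity function in the coarser basis: $x = \sum_{i=1}^{K} \xi_i N_{i,m}(x)$. Then I would substitute the expansion \eqref{eq:kiN}, $N_{i,m} = \alpha_i \hat N_{i,m} + (1-\alpha_{i+1})\hat N_{i+1,m}$, to re-express the same function in terms of the refined basis $\{\hat N_{i,m}\}_{i=1}^{K+1}$. Collecting the coefficient of each $\hat N_{i,m}$ and equating it with $\hat\xi_i$, which is forced by the uniqueness of the expansion of $x$ in the MDB-spline basis of $\widehat{\SSS}$ (linear independence, and both spaces contain the linear polynomials on every interval of positive degree), gives the relation
\[
\hat\xi_i = \alpha_i \xi_i + (1-\alpha_i)\xi_{i-1}, \qquad i = 1,\dots,K+1,
\]
with the boundary conventions $\alpha_1 = 1$ and $\alpha_{K+1}=0$ inherited from \eqref{eq:alpha_ki}, and with $\xi_0, \xi_{K+1}$ interpreted appropriately (or simply restricting attention to the indices where $\alpha_i \in (0,1)$). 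Solving this linear equation for $\alpha_i$ yields exactly \eqref{eq:KI_rev}, $\alpha_i = (\hat\xi_i - \xi_{i-1})/(\xi_i - \xi_{i-1})$, which is well defined since the Greville absciss\ae\ are strictly increasing by Proposition \ref{prop:Greville}.

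A clean bookkeeping route is to use the Abel-summation form already exploited in the proof of Proposition \ref{prop:Greville}. Writing $x - a = \sum_{i=1}^{K-1}(\xi_{i+1}-\xi_i)\sum_{\ell=i+1}^K N_{\ell,m}(x)$ and the analogous identity for $\widehat{\SSS}$, one can substitute \eqref{eq:kiN} into the partial sums $\sum_{\ell=i+1}^K N_{\ell,m}$ and match telescoping tails; the telescoping makes the $\alpha$'s appear with the neat pattern above. Either way, the algebra is short.

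The step I expect to require the most care is justifying that matching coefficients of $\hat N_{i,m}$ is legitimate for the indices of interest, i.e.\ that the $\hat\xi_i$ appearing are genuinely the Greville absciss\ae\ and that the $\{\hat N_{i,m}\}$ are linearly independent on the relevant intervals — together with handling the boundary indices where $\alpha_i$ is pinned to $0$ or $1$ (so that the recursion \eqref{eq:KI_rev} is only claimed, and only needed, for $i = \ell-d_j+1,\dots,\ell-d_j+k_j$, and is consistent with $\hat\xi_i = \xi_{i-1}$ resp.\ $\hat\xi_i=\xi_i$ at the two ends of that block). Once the index ranges in \eqref{eq:alpha_ki} are carried through carefully, the identity $\hat\xi_i = \alpha_i\xi_i + (1-\alpha_i)\xi_{i-1}$ holds on the nose and \eqref{eq:KI_rev} follows immediately.
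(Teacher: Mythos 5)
Your proposal is correct and follows essentially the same route as the paper: the paper writes a generic $f\in\SSS\subset\widehat{\SSS}$ in both bases, notes that \eqref{eq:kiN} forces the coefficient relation $\hat{c}_i=\alpha_i c_i+(1-\alpha_i)c_{i-1}$ on the nontrivial index block, and then takes $f(x)=x$ so that $c_i=\xi_i$, $\hat c_i=\hat\xi_i$, which is exactly your identity $\hat\xi_i=\alpha_i\xi_i+(1-\alpha_i)\xi_{i-1}$ solved for $\alpha_i$. The only difference is presentational: you derive the coefficient-duality relation by explicit substitution and coefficient matching, whereas the paper states it directly as a consequence of \eqref{eq:alpha_ki}.
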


\begin{proof}
Let $f$ be a function in $\SSS\subset \widehat{\SSS}$ with expansions in the MDB-spline bases of $\SSS$ and $\widehat{\SSS}$:
\[
f(x) = \sum_{i=1}^Kc_iN_{i,m}(x)=\sum_{i=1}^{K+1}\hat{c}_i\hat{N}_{i,m}(x).
\]
According to \eqref{eq:alpha_ki} the above coefficients $c_i$ and $\hat c_i$ satisfy the relationship:  
\begin{equation} \label{eq:ki_spl_coeff}
\hat{c}_i =
\begin{cases}
c_i, & i \leq \ell-d_j, \quad \\
\alpha_{i}\,c_i + (1-\alpha_{i})\,c_{i-1}, & \ell-d_j+1 \leq i \leq \ell-d_j+k_j,\\
c_{i-1}, & i \geq \ell-d_j+k_j+1,
\end{cases}
\end{equation}\
with $s_{\ell}\leq x_j < \min(s_{\ell+1},b)$. 
By taking $f(x)=x$, equation \eqref{eq:KI_rev}  is hence obtained
from the middle line of \eqref{eq:ki_spl_coeff}.
\end{proof}

By virtue of the above results, the reverse knot insertion step leading from 
$\widehat{\SSS}$ to $\SSS$ can be outlined by a triangular scheme of the form

\vspace{-0.2cm}
\begin{equation} \label{eq:basic_block}
\end{equation}
\vspace{0.1cm}
\vspace{-1.6cm}
\begin{center}
\begin{tikzcd}[column sep={3em}]
D\SSS 
\arrow{rd}[inner sep=1pt]{G} 
 &  \\
\hspace{-0.2cm}\widehat{\SSS}    
\arrow{r}[pos=0.25,inner sep=2pt]{\;\;\;\; RKI}  
& \SSS
\end{tikzcd}
\end{center}

\noindent
The \qtext{G} arrow and the \qtext{RKI} arrow are both needed to evaluate $\SSS$. More precisely, to evaluate $\SSS$ we need information about two spaces. One is the space $\widehat{\SSS}$, obtained from $\SSS$ by inserting a knot, the other is the space $D\SSS$, spanned by the derivatives of functions in $\SSS$.
Concerning $D\SSS$ we need the MDB-spline basis in order to compute the Greville absciss\ae\ of $\SSS$ (Proposition \ref{prop:Greville}), whereas about $\widehat{\SSS}$ we need the Greville absciss\ae\ in order to compute the RKI coefficients 
(Proposition \ref{prop:RKI_greville}).
Hence we need to put ourselves in a position where the necessary quantities relative to $\widehat{\SSS}$ and $D\SSS$ are known.
To this end, the main idea is to concatenate several blocks of type \eqref{eq:basic_block} giving rise to a triangular scheme such as the one in \eqref{eq:tri}, as will be detailed in the following. This triangular scheme will be constructed in such a way that the information at the starting level (the first column of the scheme) is known (or easy to calculate). The remaining elements in the triangle can be derived by recursive application of the basis block \eqref{eq:basic_block}, moving from left to right and progressively generating the information relative to each column, up to reaching the vertex of the triangle, which finally contains the information relative to the target space.    

\begin{rem}As already noted, the reverse knot-insertion formula \eqref{eq:alpha2} entails computing higher order derivatives of the B-spline basis functions. On the contrary, the above triangular scheme does not involve the calculation of derivatives of any order.
In fact, one might be misled by the fact that the B-spline basis of $ D\SSS $ appears in the formula for the Greville abscissa, however, this basis can be evaluated directly (i.e. without involving any differentiation), as we shall see shortly.   
\end{rem}

\begin{defn}[$C^r$ join of two multi-degree spline spaces]
\label{def:join_md}
Let $\SSS_L$ and  $\SSS_R$ be MD-spline spaces on adjacent intervals, $[a,b]$ and $[b,c]$ ($a<b<c$) respectively. We define the \emph{$C^r$ join of $\SSS_L$ and $\SSS_R$}
 to be the MD-spline space on $[a,c]$ whose restriction to $[a,b]$ and $[b,c]$ coincides with $\SSS_L$ and $\SSS_R$, respectively, and whose elements are $C^r$ continuous at $b$. 
\end{defn}

It shall be noted that, according to the definition above, the $C^0$ join of two conventional spline spaces is a $C^0$ MD-spline space, whereas the $C^0$ join of two MD-spline spaces is not, in general, a $C^0$ MD-spline space. 

One can join with $C^r$ continuity two multi-degree spline spaces $\SSS_L$ and $\SSS_R$ defined on abutting intervals $[a,b]$ and $[b,c]$
by performing iterated reverse knot insertions at point $b$, each of which increases the continuity at the join. 
More precisely, denoted by  $\SSS$ and $\SSS_0$, respectively, the $C^r$ and $C^0$ join of the two spaces 
$\SSS_L$ and $\SSS_R$,
we shall start from $\SSS_0$ and, by RKI, generate the $C^1$ join of $\SSS_L$ and $\SSS_R$, which we denote by $\SSS_1$. 
We shall then iterate the procedure generating a sequence of nested spaces
$\SSS_k$ as in \eqref{eq:nested}, where $\SSS_{k}$ is the $C^{k}$ join of $\SSS_L$ and $\SSS_R$, $\SSS_{k+1}\subset \SSS_k$ and $\dim(\SSS_{k+1})=\dim(\SSS_{k})-1$, $k=1,\dots,r-1$.  In this way, the last space $\SSS_r$ will be the target space $\SSS$.
These joins will be performed by concatenating several basic blocks of type \eqref{eq:basic_block}. To this end,  we will need to use the MDB-spline bases $\bNz_{L}$ and $\bNz_{R}$ of the $C^0$ MD-spline spaces associated with $\SSS_L$ and $\SSS_R$ and the representation matrices $\bM_L$ and $\bM_R$ such that 
\[
\bN_L=\bM_L \bNz_L\quad \text{and} \quad \bN_R=\bM_R \bNz_R,
\] 
where $\bN_{L}$ and $\bN_{R}$ are the MDB-spline bases of  $\SSS_L$ and $\SSS_R$, respectively.
Moreover,  on account of the previous discussion, we will as well need such information for all the derivative spaces $D^{r-n}\SSS_L$ and $D^{r-n}\SSS_R$, for $n=0,\ldots,r$, as detailed in the following.

Being $K_L$ the dimension of $\SSS_L$, $\bG_L=(\xi_{L,1},\dots,\xi_{L,K_L})$ the vector of its Greville absciss\ae \  and $\bN_L=(N_{L,1},\dots,N_{L,K_L})$ the vector of the MDB-spline basis functions, with a similar notation for $\SSS_R$,
the $C^0$ join of $\SSS_L$ and $\SSS_R$, which we indicate by $[\SSS_L,\SSS_R]$, is an MD-spline space of dimension $K_L+K_R-1$ having 
 MDB-spline basis 
\begin{equation} \label{eq:Njoin}
\bN=[\bN_{L},\bN_{R}]\coloneqq \left(N_{L,1},\dots, N_{L,K_L-1}, N_{L,K_L}+N_{R,1},N_{R,2},\dots, N_{R,K_R}  \right),
\end{equation}
and Greville absciss\ae\ 
\[
\bG = [\bG_L,\bG_R]\coloneqq \left(\xi_{L,1}, \dots, \xi_{L,K_L}\equiv \xi_{R,1},  \xi_{R,2}, \dots,\xi_{R,K_R}\right).
\]

\noindent
Furthermore, the matrix representation of $\bN$ relative to the basis $\bNz=[\bNz_{L},\bNz_R]$ of the associated $C^0$ MDB-spline space is:
\begin{equation} \label{eq:join}
  \bN = [\bM_L,\bM_R] \bNz, \quad \text{with} \quad [\bM_L,\bM_R]\coloneqq
\begin{tikzpicture}[
  baseline,
  label distance=10pt 
]
\matrix [matrix of math nodes,left delimiter=(,right delimiter=),row sep=0.05cm,column sep=0.05cm] (m) {
         {} &  {}     &    {}  &  {} &  {}   \\
       {} & \bM_L  &    {}  &  {} &  {}   \\
    {} &  {}    & \ast   & {}&  {}    \\
       {}&  {}& {} &\bM_R &  {} \\
       {}&  {}   &  {}    &    {}   & {}\\
      };

\draw[dashed] ($0.5*(m-1-3)+0.5*(m-1-4)$) -- ($0.5*(m-3-3)+0.5*(m-4-4)$)--($0.5*(m-3-1.north)+0.5*(m-4-1)$) -- ($0.5*(m-1-1)+0.5*(m-1-1)$)--($0.5*(m-1-3)+0.5*(m-1-4)$) ;

\draw[dashed]  ($0.5*(m-2-2)+0.5*(m-3-3)$) -- 
 ($0.5*(m-2-5.north)+0.5*(m-3-5)$)--
  ($0.5*(m-5-5)+0.5*(m-5-5)$)  -- ($0.5*(m-5-2)+0.5*(m-5-3)$) --($0.5*(m-2-2)+0.5*(m-3-3)$);
\end{tikzpicture} 
\end{equation}
where the above entry $\ast$ has the same value in $\bM$, $\bM_L$ and $\bM_R$.
Note that, if $\SSS_L$ is a conventional spline space, then  $\bM_L$ is trivially known, being the identity matrix of size $K_L$, and similarly for $\bM_R$. 

We can regard the operation $[\, , ]$ as the \emph{$C^0$ join} of the representation matrices, of the vectors of MDB-spline basis functions or of those of Greville absciss\ae. 
Note that this join operation acts differently depending on the entity to which it is applied.

To describe the process of concatenating several blocks of type \eqref{eq:basic_block} we will need to indicate the aforementioned spaces $\SSS_k$,  by 
a double index, that is $\SSS_k \eqqcolon \SSS^{r,k}$, $k=1,\dots,r$. Suppose
for example that $\SSS_L$ and $\SSS_R$ are to be joined with $C^3$ continuity
at $b$ to generate space $\SSS$. Iterated application of \eqref{eq:basic_block} will lead to  the following triangular scheme of \qtext{size} 4: 

\vspace{3cm}
\begin{equation} \label{eq:tri}
\end{equation}

\vspace{-4cm}
\begin{center}
\begin{tikzcd}[column sep={3em}]
  {[D^3\SSS_L,D^3\SSS_R]=\SSS^{0,0}}\coloneqq D\SSS^{1,1} 
  \arrow{rd}[inner sep=1pt]{G} 
 & & & \\
  {[D^2\SSS_L,D^2\SSS_R]=\SSS^{1,0}}\coloneqq D\SSS^{2,1}    
\arrow{rd}[inner sep=1pt]{G}  
\arrow{r}[pos=0.25,inner sep=2pt]{\;\;\;\; RKI}  
& 
\SSS^{1,1}\coloneqq D\SSS^{2,2} 
\arrow{rd}[inner sep=1pt]{G} 
& & \\ 
  {[D\SSS_L,D\SSS_R]=\SSS^{2,0}}\coloneqq D\SSS^{3,1} 
\arrow{rd}[inner sep=1pt]{G} 
\arrow{r}[pos=0.25,inner sep=2pt]{\;\;\;\; RKI}  &
\SSS^{2,1}\coloneqq D\SSS^{3,2} 
\arrow{rd}[inner sep=1pt]{G} 
\arrow{r}[pos=0.25,inner sep=2pt]{\;\;\;\; RKI}  & 
\SSS^{2,2}\coloneqq D\SSS^{3,3} 
\arrow{rd}[inner sep=1pt]{G} &
\\
  {\SSS_0=[\SSS_L,\SSS_R]=}\SSS^{3,0}
\arrow{r}[pos=0.25,inner sep=2pt]{\;\;\;\; RKI}  &
\SSS^{3,1}
\arrow{r}[pos=0.25,inner sep=2pt]{\;\;\;\; RKI}  & 
\SSS^{3,2}
\arrow{r}[pos=0.25,inner sep=2pt]{\;\;\;\; RKI}   &
\SSS^{3,3}= \SSS
\end{tikzcd}
\end{center}

\vspace{0.2cm}
More generally, the $C^r$ join of $\SSS_L$ and $\SSS_R$ will involve a similar
triangular scheme having size $r+1$, where the element in row $n$ and column
$k$ is $\SSS^{n,k}$, for $n=0,\dots,r$, $k=0,\dots,n$. 
To implement the scheme all we need to know are the Greville abscissae and
MDB-spline bases of all spaces  $S^{n,0}$,
$n=0,\dots,r$ (that is all spaces in the leftmost column), since the information on all other columns and rows of the triangle
can be derived by progressive application of the basic block
\eqref{eq:basic_block}. Recalling that $\SSS^{n,0}$ is the $C^0$ join of $D^{r-n}\SSS_L$ and $D^{r-n}\SSS_R$, these Greville absciss\ae\, and MDB-spline bases are easily obtained from the described $C^0$ join operation $[\, , ]$.

We shall now use the above triangular scheme to compute the matrix
representation \eqref{eq:M} where $\bN$ is the MDB-spline basis of $\SSS$ and $\bNz$ the MDB-spline basis of the associated $C^0$ MD-spline
space. To this aim, observe that all spaces
$\SSS^{n,0},\dots,\SSS^{n,n}$ on a row of the triangle are
associated with the same $C^0$ MD-spline space according to Definition \ref{def:asC0},
which basis will be indicated by $\bNz^n$. We may then rewrite the triangular scheme
replacing $\SSS^{n,k}$ with its MDB-spline basis $\bN^{n,k}$ and Greville
absciss\ae\ $\bG^{n,k}$ and defining $\bM^{n,k}$ to be the matrix such that
$\bN^{n,k}=\bM^{n,k}\bNz^n$. In particular, matrices $\bM^{n,0}$ are obtained by
joining the representation matrices of $D^{r-n}\SSS_L$ and $D^{r-n}\SSS_R$ as in \eqref{eq:join}, whereas, for each $k>0$, 
matrix $\bM^{n,k}$ is obtained from the preceding one by the relation
$\bM^{n,k}=\bA^{n,k}\bM^{n,k-1}$, where  $\bA^{n,k}$  is the the bidiagonal
matrix whose nontrivial entries are the RKI coefficients to pass from
$\SSS^{n,k-1}$ to $\SSS^{n,k}$.

Relying on the matrix representation it is also easy to compute the vectors
$\bIN^{n,k}$
containing integrals of functions in $\bN^{n,k}$ that are needed for calculating the Greville absciss\ae. 
In particular, let $\bN_L^{n}$ and $\bN_R^{n}$ be the vectors containing the MDB-spline basis functions in 
$D^{r-n}\SSS_L$ and $D^{r-n}\SSS_R$, respectively,  and $\bIN_L^{n}$ and $\bIN_R^{n}$ be the vectors of their integrals. 
Then the vectors $\bIN^{n,0}$, corresponding to spaces 
$\SSS^{n,0}$ in the first column of the triangular scheme, 
contain the integrals of functions in
$[\bN_L^{n},\bN_R^{n}]$, $n=0,\dots,r$, and we will indicate this by
$\bIN^{n,0}=[\bIN_L^{n},\bIN_R^{n}]$.
For all spaces appearing in the subsequent columns, instead, the integrals of
basis functions are derived from the relation $\bN^{n,k}=\bM^{n,k} \bNz^n$,
which yields $\bIN^{n,k}=\bM^{n,k} \bINz^n$.

The described procedure for computing the $C^r$ join of spaces $\SSS_L$ and
$\SSS_R$ is outlined in Algorithm \ref{alg:1}.
The algorithm takes as input the Greville absiss\ae, the integrals and the
representation matrices of the MDB-spline bases of the two spaces to be joined and of their derivative spaces up to suitable order.
On account of their ease of computation, the integrals of the $C^0$ MDB-spline
bases $\bNz^n$ (Algorithm \ref{alg:1}, line 2) are evaluated at runtime using \eqref{eq:intC0}, but they could as well be provided as input.
The algorithm returns as output the representation matrix $\bM^{r,r}$, relative to the $C^0$
MD-spline space associated with the join space $\SSS$. Moreover, in anticipation of having to further join
the generated MD-spline space, it computes and returns all the matrices $\bM^{n,n}$,
$n=0, \ldots, r-1$, related to the derivative spaces
$\SSS^{n,n}=D^{r-n}\SSS$. 
Note that the overall procedure does never use the MDB-spline bases of the initial
spaces $\SSS^{n,0}$, for $n = 0,\ldots,r-1$, but just the integrals
$\bIN_L^{n}$, $\bIN_R^{n}$ and $\bINz^n$. \\

\begin{algorithm}
 \label{alg:1}
   \caption{Matrix representation of the $C^{r}$ join of two MD-spline spaces $\SSS_L$ and $\SSS_R$}
   {\small
\setstretch{1}
\SetAlgoLined
\KwData{$\bG_L^{n}$, $\bINz_L^n$, $\bM_L^n$, $\bG_R^{n}$, $\bINz_R^n$, $\bM_R^n$, $n=0,\dots,r$.}
  \KwResult{$\bM^{n,n}$, $n=0,\dots,r$.}
    \For{$n \leftarrow  0$ \KwTo $r$}{
      $\bINz^{n} \leftarrow [\bINz_L^n,\bINz_R^n]$  \;
      $\bM^{n,0} \leftarrow [\bM_L^{n},\bM_R^{n}]$  \;
      $\bG^{n,0} \leftarrow [\bG_L^{n},\bG_R^{n}]$  \;
    }
\For{$n \leftarrow 1$ \KwTo $r$}{
   \For{$k \leftarrow 1$ \KwTo $n$}{
   $\bIN^{n-1,k-1} \leftarrow \bM^{n-1,k-1} \cdot \bINz^{n-1}$\;
   Compute $\bG^{n,k}$ 
   by $\bIN^{n-1,k-1}$ using \eqref{eq:Greville} \;
   Compute the RKI coefficients from $\bG^{n,k}$ and $\bG^{n,k-1}$ using \eqref{eq:KI_rev}
     \;
    $\bM^{n,k} \leftarrow \bA^{n,k} \bM^{n,k-1}$\;
   }
}
}
\end{algorithm}

At this point, by repeatedly joining MD-spline spaces on abutting intervals, we can generate the matrix representation of an MDB-spline basis vector $\bN$, spanning an arbitrary space $\SSS \equiv \SSS (\PPP_ {\mathbf {d}}, \XXX, \KKK) $, with respect to the basis vector $\bNz$ of the associated 
$C^0$ MD-spline space $\SSS_0(\PPP_{\mathbf{d}}, \XXX, \KKK_0)$.
Essentially what we need to do is \qtext{break} the target space into a sequence of conventional spline spaces and join these spaces in pairs with the required continuities.

In particular, with reference to Definition \ref{def:asC0},
let $\text{J}$ be the vector containing the indices, in ascending order, of the breakpoints separating intervals with different degrees, including the first and last breakpoint, that is 
\[
\text{J}=(j_0,j_1,\dots,j_{p+1}), \quad \text{with} \quad 0=j_0<j_1<\dots<j_{p+1}=q.
\]

Since all breakpoint intervals contained in each $[x_{j_h},x_{j_{h+1}}]$ have same degree, that is $d_{j_h}=\dots=d_{j_{h+1}-1}$, we 
can break the target space $\SSS$ into a sequence of conventional spline spaces, each one defined on $[x_{j_h},x_{j_{h+1}}]$, and then  
join these spaces two by two.
For example, joining the two sections of $\SSS$ relative to $[x_{j_{h-1}},x_{j_{h}}]$ and $[x_{j_h},x_{j_{h+1}}]$ with continuity $k_{j_h}$ at $x_{j_{h}}$ will produce a space on the whole interval  $[x_{j_{h-1}},x_{j_{h+1}}]$, which is the restriction of the target space $\SSS$ to that interval. This join
will generate the representation matrix of the MDB-spline basis of $\SSS$
relative to the MDB-spline basis of $\SSS_0$ restricted to $[x_{j_{h-1}},x_{j_{h+1}}]$.
The resulting space can in turn be connected with the neighboring sections of $\SSS$ at $x_{j_{h-1}}$ and/or $x_{j_{h+1}}$ with continuities $k_{j_{h-1}}$ and $k_{j_{h+1}}$, respectively. 
Not that these joins must be performed in a specific order, namely from higher
continuity to lower continuity, in such a way to guarantee that, before each
repetition of Algorithm \ref{alg:1}, all the necessary information (representation matrices and integrals of the MDB-splines) relative to the derivative spaces (up to the required order of differentiation) to the right and left of the join have been generated as the output of the previous joins.

In this paper, Algorithm \ref{alg:1} mostly serves as a step-up for the derivation of the actual algorithm (see Algorithm \ref{alg:2}) which will be presented in the next section. Algorithm \ref{alg:2}, in fact, is conceptually similar to Algorithm \ref{alg:1} and will be designed starting from it.
In particular, it represents a reformulation which, although less intuitive, allows for improving the method from a computational point of view. 

\section{Stable implementation of the RKI Algorithm}
\label{sec:general_algo}
In this section we will introduce some observations that will lead us to reformulate Algorithm 1 in an alternative way, which, although less intuitive, is numerically stable and more efficient from the point of view of the calculations to be performed. 

To this end, we start by observing that \eqref{eq:KI_rev} may raise some concern about the possibile occurrence of cancellation errors, due to the differences at the numerators and denominators.
The following result shows that the RKI coefficients can indeed be determined without resorting to the differences of Greville absciss\ae, thus it overcomes the aforementioned stability issues. In addition it also improves on the computational cost of the procedure (intended as the number of
operations to be performed) with respect to using \eqref{eq:Greville} and \eqref{eq:KI_rev}.

\begin{prop}\label{prop:RKI_noint}
The setting and assumptions being the same as in Proposition \ref{prop:RKI_greville}, the
  RKI coefficients in \eqref{eq:KI_rev} can be calculated as follows: 
 \begin{equation}\label{eq:RKI_noint}
\alpha_i = \alpha_{i-1}^{(1)} \frac{\int_a^b \hat{N}_{i-1}^{(1)}(x) dx}{\int_a^b N_{i-1}^{(1)}(x) dx},  \qquad
  i=\ell-d_j+1,\ldots,\ell-d_j+k_j,
 \end{equation}
where
$\alpha^{(1)}_{i}$  are the coefficients of reverse knot insertion from  $D\widehat{\SSS}$ to $D\SSS$. 
\end{prop}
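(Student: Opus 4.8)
The plan is to combine the two formulas already available: equation \eqref{eq:KI_rev} of Proposition \ref{prop:RKI_greville}, which expresses $\alpha_i$ in terms of the Greville absciss\ae\ of $\SSS$ and $\widehat{\SSS}$, and equation \eqref{eq:Greville} of Proposition \ref{prop:Greville}, which expresses those Greville absciss\ae\ in terms of integrals of the MDB-spline basis of the derivative spaces $D\SSS$ and $D\widehat{\SSS}$. First I would write $\xi_i-\xi_{i-1}=\int_a^b N_{i-1}^{(1)}(x)\,dx$ and $\hat\xi_i-\hat\xi_{i-1}=\int_a^b \hat N_{i-1}^{(1)}(x)\,dx$ directly from \eqref{eq:Greville}, so that the numerator $\hat\xi_i-\xi_{i-1}$ in \eqref{eq:KI_rev} can be rewritten as $(\hat\xi_i-\hat\xi_{i-1})+(\hat\xi_{i-1}-\xi_{i-1})$. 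This already eliminates the denominator in favour of $\int_a^b N_{i-1}^{(1)}$, so the task reduces to identifying the remaining quantity $\hat\xi_i-\xi_{i-1}=(\hat\xi_i-\hat\xi_{i-1})+(\hat\xi_{i-1}-\xi_{i-1})$ with $\alpha_{i-1}^{(1)}\int_a^b\hat N_{i-1}^{(1)}$.

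Next I would invoke the key structural observation that the passage from $D\widehat{\SSS}$ to $D\SSS$ is itself a reverse knot insertion (differentiation commutes with the knot-insertion structure, and the continuity at $x_j$ drops from $k_j$ to $k_j-1$ in going from $\SSS$ to $\widehat\SSS$, hence from $k_j-1$ to $k_j-2$ after differentiating). So Proposition \ref{prop:RKI_greville} applies a second time, now to the pair $D\widehat\SSS\subset$ ... wait — more precisely to the pair $D\SSS\subset D\widehat\SSS$, giving $\alpha_{i-1}^{(1)}=\dfrac{\hat\xi_{i-1}^{(1)}-\xi_{i-2}^{(1)}}{\xi_{i-1}^{(1)}-\xi_{i-2}^{(1)}}$ where the superscript $(1)$ on the Greville absciss\ae\ refers to the derivative spaces. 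The index shift here must be tracked carefully: the active range of RKI coefficients for the derivative step is shifted by one relative to the original step, because the left extended partition of $D\SSS$ has one fewer leading knot. Applying \eqref{eq:Greville} once more at the derivative level, $\xi_{i-1}^{(1)}-\xi_{i-2}^{(1)}=\int_a^b N_{i-2}^{(2)}$, and similarly for the hatted version. The cleanest route, however, is probably to avoid the second-derivative spaces entirely and instead argue directly with the coefficient-update relation \eqref{eq:ki_spl_coeff} applied to the identity function, but now expressed through the telescoping identity of Proposition \ref{prop:Greville}; I would compute $\hat\xi_i-\xi_{i-1}$ by splitting it via $\hat\xi_{i-1}$ and recognizing, from the middle line of \eqref{eq:ki_spl_coeff} read at the derivative level, that $\hat\xi_{i-1}-\xi_{i-1}=\hat\xi_{i-1}-\big(\alpha_{i-1}\hat\xi_{i-1}+(1-\alpha_{i-1})(\hat\xi_{i-2}\text{-side})\big)$ collapses after using $\xi_{i-1}-\xi_{i-2}$ and $\hat\xi_{i-1}-\hat\xi_{i-2}$. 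Either way the algebra is a short telescoping computation once the index bookkeeping is fixed.

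The main obstacle I expect is precisely this index bookkeeping: making sure that "$N_{i-1}^{(1)}$" and "$\hat N_{i-1}^{(1)}$" in \eqref{eq:RKI_noint} refer to the correctly-indexed basis functions of $D\SSS$ and $D\widehat\SSS$ (recall \eqref{eq:basisid}, $N_j^{(1)}=N_{j+1,m-1}$), and that the stated range $i=\ell-d_j+1,\dots,\ell-d_j+k_j$ is consistent with the shifted active range $\ell^{(1)}-d_j^{(1)}+1,\dots$ of the derivative-level RKI, where $d_j^{(1)}=d_j-1$ and $\ell^{(1)}$ is the corresponding knot index in $\bs^{(1)}$. Once the identification of indices is pinned down, the boundary cases ($i$ at the two ends of the range, where some $\alpha$ equals $0$ or $1$ and some integrals may vanish) need a quick separate check to confirm the formula degenerates correctly and no division by zero occurs — the positivity property \ref{propty:BS2} of MDB-splines guarantees $\int_a^b N_{i-1}^{(1)}>0$ throughout the active range, so the quotient in \eqref{eq:RKI_noint} is well defined.
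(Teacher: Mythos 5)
Your overall direction is the right one --- rewrite both the numerator and the denominator of \eqref{eq:KI_rev} via the integral formula \eqref{eq:Greville}, and use the fact that $D\SSS\subset D\widehat{\SSS}$ is itself a reverse knot insertion with coefficients $\alpha^{(1)}_i$ --- and your treatment of the denominator ($\xi_i-\xi_{i-1}=\int_a^b N_{i-1}^{(1)}$, positive by property (ii)) is exactly the paper's. But the decisive step is missing: you never actually connect the first-level quantity $\hat\xi_i-\xi_{i-1}$ to the derivative-level coefficient $\alpha_{i-1}^{(1)}$. What the paper does at this point is write $\hat\xi_i-\xi_{i-1}$ as a difference of Greville sums, truncate both sums at $j=\ell-d_j$ using $\hat N_j^{(1)}=N_j^{(1)}$ for $j\leq \ell-d_j-1$, and then substitute the \emph{integrated form of \eqref{eq:kiN} at the derivative level}, namely $\int_a^b N_j^{(1)}=\alpha_j^{(1)}\int_a^b\hat N_j^{(1)}+(1-\alpha_{j+1}^{(1)})\int_a^b\hat N_{j+1}^{(1)}$, into the second sum; the telescoping together with $\alpha_{\ell-d_j}^{(1)}=1$ then leaves exactly $\alpha_{i-1}^{(1)}\int_a^b\hat N_{i-1}^{(1)}$. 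That substitution is the whole content of the proposition, and neither of your two sketched routes performs it.

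Concretely: route (a), applying Proposition~\ref{prop:RKI_greville} a second time to $D\SSS\subset D\widehat{\SSS}$, only expresses $\alpha_{i-1}^{(1)}$ through derivative-level Greville absciss\ae\ and second-derivative integrals; it gives no bridge back to $\hat\xi_{i-1}-\xi_{i-1}$ at the original level, so it just pushes the problem one level down. Route (b), the ``collapse'' via the middle line of \eqref{eq:ki_spl_coeff}, is garbled as written (it mixes original-level absciss\ae\ with a coefficient you call ``read at the derivative level''); if carried out correctly at the original level it yields $\hat\xi_{i-1}-\xi_{i-1}=-(1-\alpha_{i-1})\int_a^b N_{i-2}^{(1)}$ and hence the recursion $\alpha_i=\bigl(\int_a^b\hat N_{i-1}^{(1)}-(1-\alpha_{i-1})\int_a^b N_{i-2}^{(1)}\bigr)/\int_a^b N_{i-1}^{(1)}$, which is a valid but different formula: it involves the previous \emph{original-level} coefficient and retains a subtraction, which is precisely what \eqref{eq:RKI_noint} is designed to eliminate. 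Converting it into \eqref{eq:RKI_noint} would require the identity $(1-\alpha_{i-1})\int_a^b N_{i-2}^{(1)}=(1-\alpha_{i-1}^{(1)})\int_a^b\hat N_{i-1}^{(1)}$, i.e.\ essentially \eqref{eq:umalfa}, which in the paper is a \emph{consequence} of the proposition --- so you cannot invoke it here without circularity. The index bookkeeping you flag as the main obstacle is real but secondary; the genuine gap is the unperformed telescoping substitution of the integrated derivative-level relation \eqref{eq:kiN}.
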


\begin{proof}
Under the above assumptions, the MDB-spline basis functions of $D\widehat{\SSS}$ and $D\SSS$ 
are such that $\widehat{N}_j^{(1)}={N}_j^{(1)}$, for $j=1,\dots,
  \ell^{(1)}-d_j^{(1)}-1=1,\dots,\ell-d_j-1$ (being $d_j^{(1)}$ the degrees in
  $D\SSS$ and $\ell^{(1)}$ the index of the largest knot in $\bs^{(1)}$ smaller
  or equal to  $x_j$). This observation and relation \eqref{eq:kiN} between the MDB-spline bases of  the derivative spaces  yield:
\begin{equation} \label{eq:dif_sum}
\begin{aligned}
\hat \xi_i - \xi_{i-1} & = a+\sum_{j=1}^{i-1} \int_a^b \hat N_j^{(1)}(x) dx  - a
  - \sum_{j=1}^{i-2} \int_a^b N_j^{(1)}(x) dx\\
&\hspace{-1.1cm}= \sum_{j=\ell- d_j}^{i-1} \int_a^b \hat N_j^{(1)}(x) dx 
  - \sum_{j=\ell- d_j}^{i-2} \int_a^b N_j^{(1)}(x) dx\\
  &\hspace{-1.1cm} = \sum_{j=\ell-d_j}^{i-1} \int_a^b \hat N_j^{(1)}(x) dx 
  - \sum_{j=\ell-d_j}^{i-2} \left(\alpha^{(1)}_{j} \int_a^b \hat{N}_j^{(1)}(x)dx
  + (1- \alpha_{j+1}^{(1)}) \int_a^b \hat{N}_{j+1}^{(1)}(x)dx \right)\hspace{-0.1cm}.
  \end{aligned}
\end{equation}
Hence the numerator of \eqref{eq:RKI_noint} comes from the above identity and the
  fact that $\alpha_{\ell-d_j}^{(1)}=1$, whereas the denominator straightforwardly follows from \eqref{eq:Greville}. 
\end{proof}

\begin{rem}\label{rem:nodiff}
Using again relation \eqref{eq:kiN} between the MDB-spline bases of  the derivative spaces
and \eqref{eq:RKI_noint}, we can obtain the following formula:
\begin{equation} \label{eq:umalfa}
  1-\alpha_{i}  =(1-\alpha_{i}^{(1)})\frac{ \int_a^b \hat{N}_{i}^{(1)}(x) dx}{\int_a^b N_{i-1}^{(1)}(x) dx}.
\end{equation}
This result avoids us to actually perform any differences of type $1-\alpha_{i}$ or $1-\alpha_{i}^{(1)}$.
In particular, when raising the continuity from $C^0$ to $C^1$, that is passing from spaces $S_0^n$ in the first column to spaces $S_1^n$ in the second column of  the triangular scheme, the coefficients  $\alpha_{i}^{(1)}$ will all be trivial (that is either zero or one) and thus so will be the differences $1-\alpha_{i}^{(1)}$. 
Hence, at each subsequent iteration, the evaluation of the right-hand side of \eqref{eq:umalfa} will just involve the calculation of the ratio of two integrals and the product by the value $1-\alpha_{i}^{(1)}$ inherited from the previous step and therefore no subtraction will need to be performed.
\end{rem}
 
 \begin{rem}\label{rem:G_vs_I}
 Relation \eqref{eq:RKI_noint}, which elegantly emerges passing through Greville absciss\ae, could alternatively be proven by induction resorting to the integral definition \eqref{def:int_rec}. The latter approach was pursued in a less general context in \cite{SW2010a} to determine the coefficients of knot insertion between two MDB-spline bases. 
 \end{rem}

 \begin{algorithm}
 {\small
\setstretch{1}
\SetAlgoLined
\KwData{$\bM_L^n$ and $\bM_R^n$, $n=0,\ldots, r$; $\bINz_L^n$ and
  $\bINz_R^n$, $n=0,\ldots,r-1$.}
   \KwResult{$\bM^n_n$, $n=0,\dots,r$.}%, $\bINz^{n}$, $n=0,\ldots,r-1$.}%{The matrix representation of the MDB-spline basis of the join space $\SSS$.}
    \For{$n \leftarrow  0$ \KwTo $r-1$}{
      $\bIN_{L}^{n} \leftarrow \bM_{L}^{n} \cdot \bINz_L^{n}$\;
      $\bIN_{R}^{n} \leftarrow \bM_{R}^{n} \cdot \bINz_R^{n}$\;
      $\bIN^{n,0} \leftarrow [\bIN_L^{n},\bIN_R^{n}]$ \;
      $\bINz^{n} \leftarrow [\bINz_L^{n},\bINz_R^{n}]$ \;
      $\bM^{n,0} \leftarrow [\bM_L^{n},\bM_R^{n}]$ \;
      }
 $\bM^{r,0} \leftarrow [\bM_L^{r},\bM_R^{r}]$ \;
 $ibstart \leftarrow \ell^{1,1}-d_j^{1,1}+1$ \;
 \For{$n \leftarrow 1$ \KwTo $r$}{
   $ib \leftarrow ibstart$ \;
   $IN_{ib-1}^{n-1,-1} \leftarrow \;$ last element of $\bIN_L^{n-1}$ \; 
   $IN_{ib}^{n-1,-1} \leftarrow \;$ first element of $\bIN_R^{n-1}$ \; 
   \For{$k \leftarrow 1$ \KwTo $n$}{
     $\alpha_{ib-1}^{n-1,k-1} \leftarrow 1$ \;
     $\beta_{ib+k-1}^{n-1,k-1} \leftarrow 1$ \;
    \For{$i \leftarrow ib$ \KwTo $ib+k-1$}{
      $\alpha_i^{n,k} \leftarrow
      \alpha_{i-1}^{n-1,k-1} \, IN_{i-1}^{n-1,k-2}/ IN_{i-1}^{n-1,k-1}$\;
 $\beta_i^{n,k} \leftarrow \beta_{i}^{n-1,k-1} IN_{i}^{n-1,k-2}/IN_{i-1}^{n-1,k-1}$; $\quad \; // \beta_i^{n,k}$ store $1-\alpha_i^{n,k}$
    }
    $\bM^{n,k} \leftarrow \bA^{n,k} \cdot \bM^{n,k-1}$ \;
    \If {$n<r$}{
      Compute $\bIN^{n,k} \leftarrow \bM^{n,k} \cdot \bINz^{n}$\;
    }
    $ib \leftarrow ib - 1$ \;
   }
   $ibstart \leftarrow ibstart + 1$ \;
}
}
    \caption{Stable matrix representation of the $C^r$ join of two MD-spline spaces $\SSS_L$ and $\SSS_R$.
    }
\label{alg:2}
\end{algorithm}

The procedure for the $C^r$ join of two MDB-spline spaces can be revisited 
on account of the above discussion, leading to Algorithm \ref{alg:2}.
In the algorithm, as well as in the examples presented below, we indicate by $\alpha_i^{n,k}$ the RKI coefficients 
to pass from $\SSS^{n,k-1}$ to $\SSS^{n,k}$ and by $\beta_i^{n,k}$ the differences $1-\alpha_i^{n,k}$. 
Furthermore, for each row $n$ of the triangular scheme, we need to identify
the index $ibstart$ of the first nontrivial RKI coefficient to be determined. 
Its initial value (line 9) is derived from Proposition \ref{prop:RKI} applied
to spaces $\SSS^{1,0}$ and $\SSS^{1,1}$, being
$d_j^{1,1}$ the degree in the interval to the left of breakpoint $x_j$
(that is, in the algorithm $j$  is the index of the breakpoint where $\SSS_L$ and $\SSS_R$ are joined and $r$ stands for $k_j$) 
and being $\ell^{1,1}$ computed with respect to the left extended partition of
$\SSS^{1,1}$.
Subsequently, the indices of the first non-zero RKI coefficients are
determined incrementing $ibstart$ while
 $n$ increases and decrementing it while $k$ increases.

 The following example not only illustrates the application of Algorithm \ref{alg:2} on a practical case, but also 
 demonstrates how to generate the matrix representation of an arbitrary 
 MD-spline space following the genaral outline discussed at the end of section \ref{sec:RKI_rev},
that is by \qtext{breaking} the target space into a sequence of conventional spline spaces and joining these spaces two by two with the required continuities.

\begin{exmp}[Matrix representation via RKI]
\label{ex:RKI_example2}
Let us consider the target space $\SSS(\PPP_{\bd},\XXX,\KKK)$ defined on  $[0,4]$, 
with breakpoints $\XXX=\{1,2,3\}$, degrees $\bd=(2,2,4,3)$ and continuities $\KKK=(1,2,3)$.
The associated $C^0$ MD-spline space will be likewise defined on $[0,4]$, have same breakpoints $\XXX$ and degrees $\bd$ and will have continuities $\KKK_0=(1,0,0)$.

Space $\SSS$ can be seen as the join of three spaces, and more precisely of  
a degree-$2$ conventional spline space $\SSS_A$ on 
$[0,2]$, a degree-$4$ polynomial space $\SSS_B$ on $[2,3]$ and a degree-$3$
  polynomial space $\SSS_C$ on $[3,4]$. 
We shall hence apply Algorithm \ref{alg:2} twice, to generate a $C^2$ join at point $2$ and a $C^3$ join at point $3$.
As we will see, these joins should be processed starting from the one of higher continuity, since this guarantees that all the information necessary to perform a join is either trivially known or has been computed during the previous ones. 
Therefore we will first calculate the $C^3$ join of spaces $\SSS_B$ and $\SSS_C$ at $3$, and then calculate the  $C^2$ join of the resulting space with $\SSS_A $ at $2$. 

With reference to Algorithm \ref{alg:2}, in which $\SSS_L$ and $\SSS_R$ will be the spaces 
$\SSS_B$ and $\SSS_C$ of this example, the first join is described by the
  triangular scheme \eqref{eq:tri}, which we rewrite below indicating the
  degrees and continuities in each space $S^{n,k}$ in the form $(\text{degree}_{\; \text{continuity}}\text{degree})$,
along with the nontrivial RKI coefficients necessary to pass from one space to another:

\vspace{-0.5cm}
\begin{equation}
\label{eq:tri_ex1}
\end{equation}

\begin{center}
\begin{tikzcd}[column sep={4em}]
\SSS^{0,0}:(1_{\; 0}0)
\arrow{rd}[inner sep=1pt]{} 
 & & & \\
\SSS^{1,0}:(2_{\; 0}1)  
\arrow{rd}[inner sep=1pt]{}  
\arrow{r}[pos=0.25,inner sep=2pt]{\;\;\;\; \alpha_3^{1,1}}  
& 
\SSS^{1,1}: (2_{\; 1}1)  
\arrow{rd}[inner sep=1pt]{} 
& & \\ 
\SSS^{2,0}:(3_{\; 0}2)
\arrow{rd}[inner sep=1pt]{} 
\arrow{r}[pos=0.25,inner sep=2pt]{\;\;\;\; \alpha_4^{2,1}}  &
\SSS^{2,1}:(3_{\; 1}2)
\arrow{rd}[inner sep=1pt]{} 
\arrow{r}[pos=0.25,inner sep=2pt]{\;\;\;\; \alpha_i^{2,2},\, i=3,4}  & 
\SSS^{2,2}:(3_{\; 2}2)
\arrow{rd}[inner sep=1pt]{} &
\\
\SSS^{3,0}:(4_{\; 0}3)
\arrow{r}[pos=0.25,inner sep=2pt]{\;\;\;\; \alpha_5^{3,1}}  &
\SSS^{3,1}:(4_{\; 1}3) 
\arrow{r}[pos=0.25,inner sep=2pt]{\;\;\;\; \alpha_i^{3,2},\, i=4,5}  & 
\SSS^{3,2}:(4_{\; 2}3)
\arrow{r}[pos=0.25,inner sep=2pt]{\;\;\;\; \alpha_i^{3,3},\, i=3,4,5}   &
\SSS^{3,3}:(4_{\; 3}3)
\end{tikzcd}
\end{center}

\smallskip
  \noindent
  Being the $C^0$ join of two polynomial spaces,
each $\SSS^{n,0}$, $n=0,\dots,3$, is a $C^0$ MD-spline space having dimension $2(n+1)$. Hence $\bM_L^n$ and $\bM_R^n$ are identity matrices and $\bIN_L^n \equiv \bINz_L^n$,  $\bIN_R^n \equiv \bINz_R^n$.
For $n=0,\ldots,2$, the integral vectors $\bINz_L^n$, resp.\ $\bINz_R^n$, can be calculated by observing
that the basis functions in
$D^{r-n}\SSS_L$, resp.\  $D^{r-n}\SSS_R$, are conventional B-splines of degree $n+1$, resp. $n$. Hence, according
 to \eqref{eq:intC0}, $\bINz_L^n$ has $n+2$ entries equal to $1/(n+2)$
and $\bINz_R^n$ has $n+1$ entries equal to $1/(n+1)$.

The vectors $\bIN^{n,0}$, $\bINz^n$ and matrices $\bM^{n,0}$ are obtained by the previously discussed $C^0$ join operation $[\,,]$; in particular in this example
\[
  \bIN^{n,0} \equiv \bINz^n=\left(\frac{1}{n+2}, \dots, \frac{1}{n+2}+\frac{1}{n+1}, \dots,  \frac{1}{n+1}\right).
\]

\noindent
Triggering Algorithm \ref{alg:2} with this information, we obtain for $n=1$, $k=1$:
\[
\alpha_3^{1,1}=\alpha_2^{0,0} \frac{IN_2^{0,-1}}{IN_2^{0,0}}=\frac13, \quad \text{ and } \quad \beta_3^{1,1}=\beta_3^{0,0} \frac{IN_3^{0,-1}}{IN_2^{0,0}}=\frac23,\]
from which 
\[
  \bA^{1,1}=\left ( \begin{array}{cccc}
  1 & 0 & 0 & 0 \\[0.5ex]
  0 & 1 & \frac23 & 0 \\[0.5ex]
  0 & 0 & \frac13 & 1 \\
  \end{array} \right ), \qquad \bM^{1,1}=\bA^{1,1}, \qquad
\bIN^{1,1}=\bM^{1,1}\bINz^1=\left(\frac13, \frac89, \frac79\right). 
\]
For $n=2, k=1$, we obtain: 
\[\alpha_4^{2,1}=\alpha_3^{1,0} \displaystyle{ \frac{IN_3^{1,-1}}{IN_3^{1,0}}}=\frac25, \quad \text{and} \quad
\beta_4^{2,1}=\beta_4^{1,0} \displaystyle{ \frac{IN_4^{1,-1}}{IN_3^{1,0}}}=\frac35,
\]
from which 
\[
  \bA^{2,1}=\left ( \begin{array}{cccccc}
  1 & 0 & 0 & 0 & 0 & 0\\[0.5ex]
  0 & 1 & 0 & 0 & 0 & 0\\[0.5ex]
  0 & 0 & 1 & \frac35 & 0 & 0\\[0.5ex]
  0 & 0 & 0 & \frac25 & 1 & 0\\[0.5ex]
  0 & 0 & 0 & 0 & 0 & 1\\
\end{array} \right ), 
\qquad
\bM^{2,1}=\bA^{2,1},
\;
\]
and 
\[
  \bIN^{2,1}=\bM^{2,1}\bINz^2=\left(\frac14, \frac14, \frac35, \frac{17}{30}, \frac13\right).
\]
We shall then proceed to $n=2, k=2$, obtaining:
\[\alpha_3^{2,2}=\alpha_2^{1,1} { \frac{IN_2^{1,0}}{IN_2^{1,1}}}=\frac38, \qquad
\beta_3^{2,2}=\beta_3^{1,1} { \frac{IN_3^{1,0}}{IN_2^{1,1}}}=\frac58,\]
\[
\alpha_4^{2,2}=\alpha_3^{1,1} { \frac{IN_3^{1,0}}{IN_3^{1,1}}}=\frac{5}{14}, \qquad
\beta_4^{2,2}=\beta_4^{1,1} { \frac{IN_4^{1,0}}{IN_3^{1,1}}}=\frac{9}{14},\]
which yields
\[
  \bA^{2,2}=\left ( \begin{array}{ccccc}
  1 & 0 & 0 & 0 & 0 \\[0.5ex]
  0 & 1 & \frac58 & 0 & 0\\[0.5ex]
  0 & 0 & \frac38 & \frac{9}{14} & 0\\[0.5ex]
  0 & 0 & 0 & \frac{5}{14} & 1 \\
  \end{array} \right ), \; \bM^{2,2}=\bA^{2,2} \bM^{2,1}=
\left ( \begin{array}{cccccc}
  1 & 0 & 0 & 0 & 0 & 0\\[0.5ex]
  0 & 1 & \frac58 & \frac38 & 0 & 0\\[0.5ex]
  0 & 0 & \frac38 & \frac{27}{56} & \frac{9}{14} & 0\\[0.5ex]
  0 & 0 & 0 & \frac17 & \frac{5}{14} & 1 \\
\end{array} \right )\]
and 
\[\bIN^{2,2}=(1/4, 5/8, 33/56, 15/28).\]
This completes the second row of the triangular scheme. 
Proceeding in this way for $n=3$ and $k=1,2,3$,
eventually yields the matrix 
\[
  \bM^{3,3}=\bA^{3,3} \bM^{3,2}=
\left ( \begin{array}{cccccccc}
  1 & 0 & 0 & 0 & 0 & 0 & 0 & 0 \\ [0.5ex]
  0 & 1 & \frac35 & \frac{7}{20} & \frac15 & 0 & 0 & 0 \\[0.5ex]
  0 & 0 & \frac25 & \frac{27}{55} & \frac{24}{55} & \frac{4}{11} & 0 & 0 \\[0.5ex]
  0 & 0 & 0 & \frac{7}{44} & \frac{49}{165} & \frac{238}{495} & \frac{28}{45} & 0 \\  [0.5ex]  
  0 & 0 & 0 & 0 & \frac{1}{15} & \frac{7}{45} & \frac{17}{45} & 1 \\    
\end{array} \right ). \]

Recall that  Algorithm \ref{alg:2} returns as output the representation matrices 
for all derivative spaces up to differentiation order three of the $C^3$ join of  
$\SSS_B$ and $\SSS_C$. 
We shall use this information to apply the algorithm again, this time for joining with $C^2$ continuity 
the conventional spline space $\SSS_A$ on $[0,2]$ and the MD-spline space on $[2,4]$ obtained as output of the previous join. 
As for this second round, the triangular scheme  will be: 

\smallskip
\begin{center}
\begin{tikzcd}[column sep={4em}]
  \SSS^{0,0}:(0_{\; -1}0_{\; 0}2_{\; 1}1)
\arrow{rd}[inner sep=1pt]{} 
 & & & \\
  \SSS^{1,0}:(1_{\; 0}1_{\; 0}3_{\; 2}2)
\arrow{rd}[inner sep=1pt]{}  
\arrow{r}[pos=0.25,inner sep=2pt]{\;\;\;\; \alpha_3^{1,1}}  
& 
  \SSS^{1,1}:(1_{\; 0}1_{\; 1}3_{\; 2}2) 
\arrow{rd}[inner sep=1pt]{} 
& & \\ 
  \SSS^{2,0}:(2_{\; 1}2_{\; 0}4_{\; 3}3)
\arrow{r}[pos=0.25,inner sep=2pt]{\;\;\;\; \alpha_4^{2,1}}  &
  \SSS^{2,1}:(2_{\; 1}2_{\; 1}4_{\; 3}3) 
\arrow{r}[pos=0.25,inner sep=2pt]{\;\;\;\; \; \; \alpha_i^{2,2}, \, i=3,4}  & 
  \SSS^{2,2}:(2_{\; 1}2_{\; 2}4_{\; 3}3) 
\end{tikzcd}
\end{center}

\smallskip
\noindent
Denoted as usual  by $\SSS_L$ and $\SSS_R$ the two spaces to be joined, the corresponding representation matrix
$\bM_L^n$ will be the identity of size $n+2$, whereas  
$\bM_R^n=\bM^{n+1,n+1}$, being $\bM^{n+1,n+1}$, $n=0,1,2$, the output of the previous $C^3$ join. 
The integrals of the $C^0$ MDB-spline functions required by Algorithm \ref{alg:2} can be evaluated by \eqref{eq:intC0} for the left-hand side spaces $\SSS_L^n$, $n=0,1$, which gives:  
\[\bINz_L^0=(1,1) \quad \text{and} \quad \bINz_L^1=\left(\frac12,1,\frac12\right).\]
 For spaces $\SSS_R^n$, $n=0,1$, instead, the integrals may be stored as output of the first join or efficiently calculated at runtime by 
 \eqref{eq:intC0}, obtaining:
\[\bINz_R^0=\left(\frac13,\frac13,\frac56,\frac12\right) \quad \text{and} \quad \bINz_R^1=\left(\frac14,\frac14,\frac14,\frac{7}{12},\frac13,\frac13\right),\]
hence, from lines 2 and 3 of the algorithm, we will obtain $\bIN_L^n=\bINz_L^n$, $n=0,1$, and 
\[
\bIN_R^0=\left(\frac13,\frac89,\frac79\right)\quad \text{and} \quad \bIN_R^1=\left(\frac14,\frac58,\frac{33}{56},\frac{15}{28}\right).\]
Finally, vectors $\bIN^{n,0}$ and $\bINz^n$, $n=0,1$, (lines 4 and 5) and matrices
$\bM^{n,0}$, $n=0,1,2$, (line 6) are the $C^0$ join of the above quantities. 
The output of this second and last join is a matrix 
$\bM^{2,2}$ such that
\[
  \bN=\bM^{2,2}\bNz^2. 
\]
The above is the matrix representation of the MDB-spline basis of the target space $\SSS$ relative to the basis of the associated $C^0$ MD-spline space $\bNz=\bNz^2$. 

As previously mentioned, note that processing the joins from higher to lower continuity makes so that, each time, 
all the information to address the next join is available or has been computed during the previous steps.
\end{exmp}

With the previous example in mind, we can further discuss some details of our implementation.
In order to save on memory allocation, only one
matrix $\bM^{n,k}$ should be stored for each row of the triangular scheme, that is each $n=0,\dots,r$, since 
such matrices can be overwritten when moving from one column to the other.
In addition, it is unnecessary to create matrices $\bA^{n,k}$, which we merely introduced for ease of presentation, as the coefficients $\alpha_i^{n, k}$ and
$\beta_i^{n, k} $ can be stored in temporary one-dimensional arrays, to be
destroyed after been used for the coefficient computations at lines 15 and 16
and the RKI steps at lines 18 and 19. 
The integrals $\bIN^{n,k}$ can as well be stored in temporary one-dimensional arrays. 
Moreover, only one array can be used to store all vectors  $\mathbf{IN0}^n$,  
$n=0,\ldots,r-1$, since each of these can be overwritten at the end of the corresponding row of the triangular scheme. 

\section{Stability analysis}
\label{sec:stability}
Unlike how it usually happens, namely that we propose an algorithm and then we analyze its
stability, we designed an algorithm that would possess all the
characteristics to be numerically stable. 
This feature becomes clear if we  break Algorithm \ref{alg:2} into a sequence
of basic steps, each involving numerically stable operations only. The results
of this analysis will be confirmed and highlighted by the numerical
experimentation presented in subsection \ref{sec:num_cons_RKI}.

Our discussion may benefit from some preliminary considerations.
First, it is easy to count how many RKI coefficients will be calculated over the course of the algorithm. 
In particular, the \qtext{for} loops at lines 14 and 17 show that  we will have to calculate one coefficient $\alpha_i^{n,1} $ for $n = 1,\ldots r$, two coefficients $\alpha_i^{n, 2} $ 
for $n = 2,\ldots,r$ and so on up to $r$ coefficients $\alpha_i^{n,r} $ for $n = r$.
Also note that, for $k = 1$, at lines 18 and 19 reference is made to elements of
vectors $\bIN^{n-1,-1} $, never formally initialized, but
whose values are trivially known from $\bIN_L^{n}$ and
$\bIN_R^{n}$ (lines 12 and 13).
Again, for each $n$ and $k$, the first $\alpha_{i}^{n-1,k-1}$ and the last
$\beta_i^{n-1,k-1}$ considered are trivially equal to one (lines 15 and 16).
Finally, as recalled earlier, storing the quantities $\beta_i^{n-1, k-1}$ allows us to avoid the evaluation of the quantities $1-\alpha_i^{n, k}$ and therefore the whole algorithm does not contain any floating point subtraction.

Bearing in mind these observations,  we can break the algorithm in the following basic steps.
\begin{itemize}[leftmargin=0.6cm]
  \item[A)] Calculation of the input vectors $\bINz_R^n$ and $\bINz_L^n$. 
 Since functions in $\bNz_R^n$ and $\bNz_L^n$ are $C^0$ MDB-splines, the
    evaluation of their integrals involves computing and adding the
    integrals of conventional B-splines according to \eqref{eq:intC0},
    all of which are positive quantities. 
Likewise, the $C^0$ join of the integral vectors at lines 4 and 5 involves summations between positive quantities. 
   
\item[B)] Products between matrices $\bM^{n,k}$ (as well as $\bM_L^n$ and $\bM_R^n$), all of which entries belong to $[0,1]$, 
and positive vectors $\bINz^{n}$ (as well as $\bINz_L^{n}$ and $\bINz_R^{n}$) (lines 2,3 and 23).
Due to the fact that only some elements of the vectors at the right-hand side
    of these assignments are used, these products are reduced to dot products
    between single rows of matrices $\bM^{n,k} $ and vectors $\bINz^{n}$.
Note that each iteration involves as many such dot products as the integrals at lines 18 and 19, that is 3 dot products at most (since some of those integrals are used twice, so they could be stored and reused). 

  \item[C)] Evaluation of the right-hand sides of the assignments at lines 18 and 19. 
    This amounts to calculating first the product, which produces a value
    in $[0,1]$, and then the ratio, obtaining a result in $[0,1]$ as can be
    seen from the fact that 
    $N_{i-1}^{n-1,k-1}=\alpha_{i-1}^{n-1,k-1}N_{i-1}^{n-1,k-2}
    + (1-\alpha_{i}^{n-1,k-1}) N_{i}^{n-1,k-2}$.

 \item[D)] Product at the right-hand side of the assignment at line 21. 
Rather than a matrix product, it is convenient to perform this calculation as a repeated combination of two rows  
of $\bM^{n,k-1}$ (all of which entries are in $[0,1]$), of the form 
 $\alpha_j^{n,k} \mathbf{m}_{j-1}^{n,k-1} + \beta_{j+1}^{n,k} \mathbf{m}_j^{n,k-1}$,
   where $\alpha_j^{n,k}$ and  $\beta_{j+1}^{n,k}=(1-\alpha_{j+1}^{n,k})$ are
    entries on the bidiagonal of $\bA^{n,k}$ and $ \mathbf{m}_j^{n,k-1}$ is the $j$th row of
    $\bM^{n,k-1}$. 
%For a $C^r$ join, these combinations are as many as the overall number of nontrivial RKI coefficients $\alpha_i^{n,k}$ plus one, namely $2r+3(r-1)+4(r-2)+\ldots+2 r +r +1$.
\end{itemize}

The above analysis emphasizes that the proposed algorithm consists of summations, ratios and products between positive quantities (most of which belonging to $[0,1]$) and dot products between vectors with positive entries, all of which are numerically stable arithmetic operations (see \eg \cite{Higham2002}).
It also allows us to compute how many operations will be performed for the $C^r$ join of two MD-spline spaces, that is:
\begin{itemize}[leftmargin=0.6cm]
  \item $r$ operations of type A);
  \item ${\frac{(r-1)r}{2}}$ operations of type B), or 
    $ 3(2(r-1) + 3(r-2) + \ldots (r-1)2 + r)$ dot products;
  \item $r + 2(r-1) + 3(r-2) + \ldots (r-1) \; 2 + r$ operations of type C); %(2 mult e 2 div);
  \item ${\frac{r(r+1)}{2}}$ operations of type D)
or $2r+3(r-1)+4(r-2)+\ldots+2 r +r +1$ combinations of two rows of
    $\bM^{n,k-1}$, that is as many as the overall number of nontrivial RKI coefficients $\alpha_i^{n,k}$ plus one;
  \end{itemize}
and thus to estimate the computational complexity
of the algorithm, which amounts to $O(r^2) $ operations. 

\subsection{Experimental results} \label{sec:num_cons_RKI}
Besides supporting the conclusions of the above stability analysis, the following numerical experiments 
provide a comparison between the new proposal  and previous ones. For the sake of brevity, we will refer to the present method and to those in  \cite{BC2020} and \cite{SPE2018} as RKI/Greville, RKI/Derivative and H-Operator, respectively. Recall that both the RKI/Derivative and H-Operator algorithms make use of derivatives (of order up to the target continuity) of MDB-splines and thus suffer in a similar way from the fact that those quantities may be very large numbers. 

Our analysis is based on calculating and comparing the algorithmic errors on the evaluation of MDB-spline basis functions and/or on the representation matrix. To this end, the \qtext{exact} values are obtained by symbolic computation, using MATLAB's Symbolic Math Toolbox, whereas the numerical results rely on MATLAB's standard precision (rounding unit $U \approx 10^{-16}$). In all the examples, the symbolic implementation of the RKI/Greville algorithm was able to produce an output within reasonable time, due to the fact that 
the method performs operations between small quantities all of which can be stored in rational form. 
As would be expected, the response times of the symbolic procedure become impractical for more complex tests. 

This section contains three experiments. The first (Example \ref{ex:Cox}) is aimed at evaluating how our analysis approach, based the algorithmic error, relates to the a posteriori error bound in Cox's seminal paper on the evaluation of B-splines \cite{Cox1972}. Like the referenced paper, this example is concerned with conventional B-splines and as a consequence the representation matrix is the identity matrix.
In the successive two experiments (Examples \ref{ex:basis} and \ref{ex:matrix}) we   
compare the
RKI/Greville Algorithm with previous proposals on a variety of test spaces featured by both uniform and nonuniform distributions of breakpoints as well as largely inhomogeneous degrees. The parameters of the different test spaces that will be considered are summarized in 
Table \ref{tab:tests}.

\begin{table}[h]
\begin{center}
\resizebox{\textwidth}{!}{
\begin{tabular}{cllll}
\hline
{}& $[a,b]$ & $\XXX$ &  $\bd$  & $\KKK$ \\
\hline
  Test 1 & $[-10000,10000]$ & $\{-9999,0,9999\}$ & $ (5,3,3,5) $ & $(3,2,3)$\\[0.5ex]
  Test 2 & $[-10000,10000]$ & $\{-9999,0,9999\}$ & $(3,5,5,3)$ & $(3,4,3)$ \\[0.5ex]
  Test 3 & $ [1,1024]$ & $\{2^j\},\; j=1,\dots,9$ & $(9,9,10,10,9,9,10,10,9,9)$ & $(8,9,9,9,8,9,9,9,8)$\\[0.5ex]
  Test 4 & $[-1024,1]$ & $\{-2^{10-j}\}, \; j=1,\dots,9$ & $( 9,9,10,10,9,9,10,10,9,9) $ & $(8,9,9,9,8,9,9,9,8)$ \\[0.5ex]
  Test 5 & $[0,22]$ & $\{j\}, \; j=1,\ldots,21$ & $d_i=19$, $i=10,\dots,11$; &  $k_i=18$, $i=11,\dots,12$; \\
 & & & $d_i=20$, $i=5,\dots,9,12,\dots,16$; &  $k_i=19$ $i=6,\dots,10,13,\dots,17$; \\
 &  &  & $d_i=21$, $i=0,\ldots,4,17,\dots,21$ &  $k_i=20$, $ i=1,\ldots,5,18,\dots,21$;
 \\[0.5ex]
  Test 6 & $[-10000,10000]$ &  $\{-9999,0,9999\}$ & $(21,19,19,21) $ & $(15,10,15)$\\
\hline
\end{tabular}
}
  \caption{Test spaces for Examples \ref{ex:basis} and \ref{ex:matrix}.}
  \label{tab:tests}
\end{center}
\end{table}

\begin{exmp}[A comparison with conventional B-splines] \label{ex:Cox}
This experiment replicates \cite[Example 2]{Cox1972}, which is the most challenging test in the referenced paper. The setting is a conventional spline space of degree 21, defined in the interval $[0,22]$, with equispaced breakpoints $x_j$ placed at the integers and $C^{20}$ continuity at each breakpoint. Note that choosing both the breakpoints and the evaluation points to be exactly represented in the floating point standard  allows for avoiding roundoff errors in the initial data. 
Table \ref{tab:Cox} shows the algorithmic error on the evaluation of the \qtext{central} B-spline $N_{22,21}$ at the breakpoints $x_j$.
For the same experiment, \cite[Table 2]{Cox1972} reports the values of $N_{22,21}$ along with the a posteriori error bounds established in that paper. 
In particular, the values of $N_{22,21}$ found by Cox refer to non-normalized basis functions and are the same  
as in the second column of Table \ref{tab:Cox}, whereas the values in the third
  column of Table \ref{tab:Cox} are obtained with the 
 recurrence relation for normalized $C^0$ MDB-splines in \cite{BC2020}, which is a simple generalization of the more established scheme in \cite{dB1972}. The values in the two columns, however, only differ by a 
normalization constant equal to the width of the support.

\begin{table}[h]
\begin{center}
\resizebox{\textwidth}{!}{
\begin{tabular}{rccccc}
\hline
  $x_j$ &  $N_{22,21}(x_j)$ Non-Normalized & $N_{22,21}(x_j)$ Normalized & Error Bound & Absolute
  Alg. Error & Relative Alg. Error\\
\hline
  1 &   8.896791392450574e-22&    1.957294106339126e-20&  3.2378e-34 & 1.3644e-36 & 6.9706e-17 \\
  2&    1.865772813284987e-15&    4.104700189226971e-14&  6.7901e-28 & 4.0230e-30 & 9.8009e-17 \\
  3 &   9.265310806863227e-12&    2.038368377509910e-10&  3.3719e-24 & 3.3787e-26 & 1.6575e-16 \\
  4&    3.708541354285271e-09&    8.158790979427597e-08&  1.3497e-21 & 7.0656e-24 & 8.6601e-17 \\
  5&    3.402962627063746e-07&    7.486517779540241e-06&  1.2384e-19 & 9.0997e-23 & 1.2155e-17 \\
  6&    1.107329203006056e-05&    2.436124246613324e-04&  4.0299e-18 & 2.4981e-20 & 1.0254e-16 \\
  7&    1.595958078468785e-04&    3.511107772631326e-03&  5.8082e-17 & 9.0643e-19 & 2.5816e-16 \\
  8&    1.156908330166488e-03&    2.545198326366273e-02&  4.2103e-16 & 3.6835e-18 & 1.4472e-16 \\
  9&    4.554285942496692e-03&    1.001942907349272e-01&  1.6574e-15 & 7.1213e-18 & 7.1075e-17 \\
  10&    1.019454972176512e-02&    2.242800938788327e-01&  3.7101e-15 & 2.1568e-17 & 9.6165e-17 \\
  11&    1.330103123779249e-02&    2.926226872314347e-01&  4.8407e-15 & 8.2012e-17 & 2.8026e-16 \\
  12&    1.019454972176512e-02&    2.242800938788327e-01&  3.7101e-15 & 2.1568e-17 & 9.6165e-17 \\
  13&    4.554285942496692e-03&    1.001942907349272e-01&  1.6574e-15 & 7.1213e-18 & 7.1075e-17 \\
  14&    1.156908330166488e-03&    2.545198326366273e-02&  4.2103e-16 & 3.6835e-18 & 1.4472e-16 \\
  15&    1.595958078468785e-04&    3.511107772631326e-03&  5.8082e-17 & 9.0643e-19 & 2.5816e-16 \\
  16&    1.107329203006056e-05&    2.436124246613324e-04&  4.0299e-18 & 2.4981e-20 & 1.0254e-16 \\
  17&    3.402962627063746e-07&    7.486517779540241e-06&  1.2384e-19 & 9.0997e-23 & 1.2155e-17 \\
  18&    3.708541354285271e-09&    8.158790979427597e-08&  1.3497e-21 & 7.0656e-24 & 8.6601e-17 \\
  19&    9.265310806863227e-12&    2.038368377509910e-10&  3.3719e-24 & 3.3787e-26 & 1.6575e-16 \\
  20&    1.865772813284987e-15&    4.104700189226971e-14&  6.7901e-28 & 4.0230e-30 & 9.8009e-17 \\
  21&    8.896791392450574e-22&    1.957294106339126e-20&  3.2378e-34 & 1.3644e-36 & 6.9706e-17 \\
\hline
\end{tabular}}
  \caption{Numerical experiments reported in Example \ref{ex:Cox}}
  \label{tab:Cox}
\end{center}
\end{table}

A running error analysis was also integrated in our implementation and returned a posteriori error bounds in accordance with those reported by Cox (considering that we work in double precision with 16 digits, while Cox with 11 digits).
It shall be noted, in particular, how the results in the column of absolute algorithmic errors are consistent with the corresponding error bounds and the corresponding relative errors that will be used to assess the numerical stability of our proposal.

In the conclusions of \cite{Cox1972}, on the basis of the a posteriori error bound, it is expected that the maximum relative error attained with a $t$-digits mantissa cannot exceed $(70)2^{- t}$ for degree $10$ or less, whereas it cannot exceed $(700)2^{- t}$ for degree $100$ or less. It is also observed that the bound on the relative error grows linearly with the degree of a spline.
Our experimentation shows that the actual error is even
lower. In fact, for degree $100$ or less the relative algorithmic error for most experiments is about $10^{- 16}$, with only a few values of the order of $ 10^{- 15}$, whereas the bound estimated by Cox would be of the order of $10^{- 14}$.
We believe that this may be attributable to cancellation of rounding error, 
which may cause the final computed answer to be much more accurate than the intermediate quantities. 
This phenomenon has been described, \eg, in \cite[p.19]{Higham2002}.

We conclude by mentioning that a similar study of algorithmic errors was carried out on the evaluation of derivatives. 
Also in this case for splines of degree less than or equal to 50 and order of differentiation up to ten we never encountered algorithmic errors exceeding $\approx 10^{- 14}$.
\end{exmp}

\vspace{7pt}
\noindent

\begin{exmp}[Algorithmic error on the evaluation of MDB-splines]\label{ex:basis}
This experiment illustrates how erroneous the results of the RKI/Derivative method can be for degrees as low as three and five if the knot spacing is highly nonuniform.
Such a case is important in practice since it is often of interest to investigate the case of near-coincident knots.  
  From Table \ref{tab:ex:2} one can observe that at 
  $x_1 = -9999$ and $x_3= 9999$ the values calculated by RKI/Greville agree for symmetry, while this is not the case for the corresponding results obtained by RKI/Derivative. Moreover, the values of the algorithmic errors show that the accuracy of the RKI/Derivative method is limited to the first 6/7 digits of precision, as appearing from the value of the central MDB-spline for $x_2 = 0$. 
Similar results are reported in Table \ref{tab:ex:3}, from which one can again see that the RKI/Derivative method returns strongly asymmetric results despite the expected symmetry of the evaluated  MDB-spline. 
In both experiments the results obtained by RKI/Greville agree for symmetry and are extremely accurate, which is consistent with the conclusions of the theoretical analysis. 
  
The experiment reported in Table \ref{tab:ex:4} concerns a space with a less challenging uneven distribution of breakpoints, but higher degrees. In this case, the RKI/Derivative method appears adequate up to 9 figures only. 
Analogous results were also obtained for the spaces \qtext{Test 4} , \qtext{5} and \qtext{6}. 
Overall, the large errors for the RKI/Derivative algorithm show that the method is potentially unstable.
Conversely the small algorithmic errors of the RKI/Greville method confirm its stability.
The same conclusions are supported by the results illustrated in Example \ref{ex:matrix}, concerned with the algorithmic errors with respect to the entries of the representation matrices. 

\begin{table}[h]
\begin{center}
\resizebox{\textwidth}{!}{
\begin{tabular}{ccccc}
\hline
\multicolumn{1}{c}{\multirow{2}{*}{}} &
\multicolumn{2}{c}{$\text{RKI/Greville}$} &
\multicolumn{2}{c}{$\text{RKI/Derivative}$} \\
  \cline{2-3} \cline{4-5}
 x & $N_{5}(x)$ Normalized & Relative Alg. Error & $N_{5}(x)$ Normalized & Relative Alg. Error \\
\hline
 -9.999000e+03 &  4.500275008083014e-09 &   1.8381e-16 &   4.500275772672185e-09 &   1.6990e-07 \\
  0.000000e+00 &  5.000083333610773e-01 &   0.0000e+00 &   5.000084045999867e-01 &   1.4248e-07 \\
  9.999000e+03 &  4.500275008083015e-09 &   0.0000e+00 &   4.500275649258610e-09 &   1.4247e-07 \\
\hline
\end{tabular}}
  \caption{Numerical results discussed in Example \ref{ex:basis} for space
  \qtext{Test 1} in Table \ref{tab:tests}.}
  \label{tab:ex:2}
\end{center}
\resizebox{\textwidth}{!}{
\begin{tabular}{ccccc}
\hline
\multicolumn{1}{c}{\multirow{2}{*}{}} &
\multicolumn{2}{c}{$\text{RKI/Greville}$} &
\multicolumn{2}{c}{$\text{RKI/Derivative}$} \\
  \cline{2-3} \cline{4-5}
 x & $N_{4}(x)$ Normalized & Relative Alg. Error & $N_{4}(x)$ Normalized & Relative Alg. Error \\
\hline
  -9.999000e+03 &  2.499250262410031e-12 &   0.0000e+00 &   2.499214206146373e-12 &   1.4427e-05 \\
  0.000000e+00 &  3.750749868799358e-01  &  0.0000e+00 &   3.750749863390447e-01  &  1.4421e-09 \\
  9.999000e+03 &  2.499250262410030e-12 &   1.6161e-16 &   2.499250262410031e-12 &   1.6161e-16 \\
\hline
\end{tabular}}
  \caption{Numerical results discussed in Example \ref{ex:basis} for space
  \qtext{Test 2} in Table \ref{tab:tests}.}
 \label{tab:ex:3}
  \resizebox{\textwidth}{!}{
  \begin{tabular}{ccccc}
\hline
\multicolumn{1}{c}{\multirow{2}{*}{}} &
\multicolumn{2}{c}{$\text{RKI/Greville}$} &
\multicolumn{2}{c}{$\text{RKI/Derivative}$} \\
  \cline{2-3} \cline{4-5}
 x & $N_{9}(x)$ Normalized & Relative Alg. Error & $N_{9}(x)$ Normalized & Relative Alg. Error \\
\hline
  2.000000e+00  &  2.912087112938504e-13 &   3.4674e-16 &   2.912087106308203e-13 &   2.2768e-09 \\
  4.000000e+00 &   1.275774160308294e-09 &   1.6209e-16 &   1.275774157784237e-09 &   1.9785e-09 \\
  8.000000e+00 &   4.806036147184862e-07 &   2.2030e-16 &   4.806036141267946e-07 &   1.2311e-09 \\
  1.600000e+01 &   5.258129295850228e-05 &   3.8662e-16 &   5.258129293072319e-05 &   5.2831e-10 \\
  3.200000e+01 &   2.147713272383253e-03 &   8.0771e-16 &   2.147713271996800e-03 &   1.7994e-10 \\
  6.400000e+01 &   3.541058939374863e-02 &   5.8787e-16 &   3.541058939374988e-02 &   3.4684e-14 \\
  1.280000e+02 &   2.206016671195212e-01 &   3.7745e-16 &   2.206016671340502e-01 &   6.5860e-11 \\
  2.560000e+02 &   3.592347216925473e-01 &   0.0000e+00 &   3.592347217235125e-01 &   8.6198e-11 \\
  5.120000e+02 &   4.466585515804859e-02 &   1.5535e-16 &   4.466585516215183e-02 &   9.1866e-11 \\
\hline
\end{tabular}}
  \caption{Numerical results discussed in Example \ref{ex:basis} for space
  \qtext{Test 3} in Table \ref{tab:tests}.}
   \label{tab:ex:4}
\end{table}
\end{exmp}

\begin{exmp}[Algorithmic error on the representation matrix]\label{ex:matrix}
In this second type of test, we consider the algorithmic error on the representation matrix, calculated as
$$
\| \bM_{\text{16\_digits}} - \bM_{\text{exact}} \|_1,
$$
where $\bM_{\text{16\_digits}}$ is the numerically calculated matrix, whereas $\bM_{\text{exact}}$ is the one obtained by symbolic computation.
This is both an absolute and a relative error on account of the fact that  $\| \bM_ {\text{exact}} \|_1 = 1$.

The algorithmic error obtained with RKI/Greville is compared with those relative to both RKI/Derivative and H-operator (the  code for the latter is taken from \cite{SPE2018}.\footnote{
The H-operator algorithm, as implemented in \cite{SPE2018}, returns a representation matrix with respect to a sequence conventional B-spline bases connected  with $C^{- 1}$ continuity, therefore, with respect to ours, it has replicated columns that have been removed for a fair comparison.}).
\vspace{0.5cm}
\begin{table}[h]
\begin{center}
\resizebox{\textwidth}{!}{
\begin{tabular}{cccccccc}
\hline
  \multicolumn{1}{c}{$\text{Examples}$} &
\multicolumn{3}{c}{$\text{Algorithmic Errors}$} &
  \multicolumn{1}{c}{$\text{Examples}$} &
\multicolumn{3}{c}{$\text{Algorithmic Errors}$} \\
  \cline{2-4} \cline{6-8}
  & RKI/Greville & RKI/Derivative & H-Operator &
  & RKI/Greville & RKI/Derivative & H-Operator \\
\hline
%exmp.2 ok
   Test 1 & 1.0$\times$$10^{-16}$ & 2.8$\times$$10^{-7}$& 2.8$\times$$10^{-7}$ &
%exmp.6 ok
   Test 4 & 6.0$\times$$10^{-16}$ & 7.6$\times$$10^{-13}$& 1.1$\times$$10^{-12}$ \\
\hline
%exmp.3 ok
    Test 2 & 6.7$\times$$10^{-16}$ &  4.3$\times$$10^{-9}$& 4.3$\times$$10^{-9}$ &
%exmp.1 ok
   Test 5 & 1.0$\times$$10^{-15}$ &  5.4$\times$$10^{-2}$& 1.2$\times$$10^{-1}$ \\
\hline
%exmp.5 ok
   Test 3 & 3.7$\times$$10^{-16}$ & 1.1$\times$$10^{-8}$& 1.1$\times$$10^{-8}$ &
%exmp.4 ok
   Test 6 & 1.7$\times$$10^{-14}$ & 6.5$\times$$10^{+7}$& 6.5$\times$$10^{+7}$ \\
\hline
\end{tabular}}
\caption{Algorithmic errors on the representation matrix for the test spaces in
  Table \ref{tab:tests}.}
\label{tab:RKI_2}

\vspace{0.3cm}
\resizebox{\textwidth}{!}{
\begin{tabular}{cccccccccc}
\hline
\multicolumn{1}{c}{\multirow{2}{*}{$k_1$}} &
\multicolumn{1}{c}{\multirow{2}{*}{$K$}} &
\multicolumn{3}{c}{$\text{Algorithmic Errors}$} &
\multicolumn{1}{c}{\multirow{2}{*}{$k_1$}} &
\multicolumn{1}{c}{\multirow{2}{*}{$K$}} &
\multicolumn{3}{c}{$\text{Algorithmic Errors}$} \\
  \cline{3-5} \cline{8-10}
  & & RKI/Greville & RKI/Derivative & H-Operator &
  & & RKI/Greville & RKI/Derivative & H-Operator \\
\hline
  5 & 17 & 2.5$\times$$10^{-16}$
    &  2.5$\times$$10^{-16}$& 2.5$\times$$10^{-16}$ &
  13 & 25 & 2.7$\times$$10^{-16}$
     & 1.4$\times$$10^{-12}$& 1.4$\times$$10^{-12}$ \\
\hline
  7 & 19 & 2.2$\times$$10^{-16}$ & 1.4$\times$$10^{-14}$&
    1.4$\times$$10^{-14}$ &
  15 & 27 & 4.4$\times$$10^{-16}$
     & 2.4$\times$$10^{-11}$& 2.4$\times$$10^{-11}$ \\
\hline
  9 & 21 & 3.9$\times$$10^{-16}$
    &  4.7$\times$$10^{-14}$& 4.7$\times$$10^{-14}$ &
  17 & 29 & 3.1$\times$$10^{-16}$
     & 2.2$\times$$10^{-10}$ & 2.2$\times$$10^{-10}$ \\
\hline
  11 & 23 & 2.5$\times$$10^{-16}$
     & 1.7$\times$$10^{-13}$& 1.7$\times$$10^{-13}$ &
  19 & 31 & 4.5$\times$$10^{-16}$
     & 1.3$\times$$10^{-9}$ & 1.3$\times$$10^{-9}$ \\
\hline
\end{tabular}}
\caption{Target space $\SSS(\PPP_{\bd}, \XXX, \KKK)$
  with $[a,b]=[0,2]$, $\XXX=(1)$ and $\bd=(19,20)$; the dimension of
  $\SSS_0$ is $K_0=40$.}
  \label{tab:conf-RKI_1}
\end{center}
\end{table}

\noindent
Table \ref{tab:RKI_2} contains the algorithmic errors
obtained for all the test spaces in Table \ref{tab:tests}. 
In particular, space \qtext{Test 5} is the multi-degree counterpart of the aforementioned experiment \cite[Example 2]{Cox1972}. \qtext{Test 6}, instead, is aimed at comparing the considered algorithms in case of a very nonuniform partition and high degrees.
Finally, Table \ref{tab:conf-RKI_1} shows the algorithmic errors obtained in a test case presented in our previous paper \cite{BC2020}.
All the results confirm the adequacy of the new proposal, by contrast with previous methods, which, in some cases, suffer from serious loss of accuracy.
\end{exmp}

\section{Matrix representation in terms of the conventional B-spline basis of
maximum degree}\label{sec:RDE}
For a given target space 
 $\SSS(\PPP_{\bd}, \XXX, \KKK)$, another way to compute a matrix representation
 \eqref{eq:M} 
is to choose as initial space a conventional spline space $\SSS_0 \equiv \SSS(\PPP_ {\bd_0},\XXX, \KKK_ {0})$, with $d_j^0= m$ for all $j$,
being $m \coloneqq \max_j \{d_j \} $. In this setting, it still holds that  $\SSS \subset \SSS_0 $, but this time matrix $\bM$ needs to be computed performing successive steps of \emph{reverse degree elevation} (RDE).
As the name suggests, reverse degree elevation is the reverse operation of
degree elevation and we can understand it by referring to Remark \ref{rem:KIvsRKI}, where instead of decreasing/increasing the continuity at a breakpoint, one increases/decreases the degree on a breakpoint interval.
Therefore, each round of reverse degree elevation diminishes by one the degree in an interval, until each interval $[x_j,x_{j+1}]$ reaches the target degree $d_j$.
Overall, the number of steps $g$ required to pass from $\SSS_0$ to $\SSS$ amounts to the total number of RDE steps to be performed, that is
$ g \coloneqq \sum_{j=0}^q (m - d_j).  $
The process must be accomplished in such a way to generate a sequence of
MD-spline spaces $\SSS_n\equiv \SSS(\PPP_{\bd_n}, \XXX, \KKK_{n})$, $n=0,\dots,g$, such that
\begin{equation} \label{eq:nested2}
\SSS\coloneqq \SSS_g \subset \SSS_{g-1} \subset \cdots \subset \SSS_1 \subset \SSS_0,
\end{equation}
where each space $\SSS_n$ is defined on $[a,b]$, has same breakpoints $\XXX$ and continuities $\KKK$ as the target
space $\SSS$
and has dimension $K_n\coloneqq K+(g-n)$, being $K$ the dimension of $\SSS$.
In general, there may be more than one sequence \eqref{eq:nested2} leading from
$\SSS_0$ to $\SSS_g$ and therefore, while $\SSS_0$ and $\SSS_g$ are fixed, the
intermediate spaces $\SSS_1, \dots, \SSS_{g-1}$ will depend on the specific ordering of RDE steps performed.

\cite[Proposition 7]{BC2020} provides a result akin to Proposition \ref{prop:RKI_greville}, where space $\widehat\SSS$ is obtained from $\SSS$ through (local) degree elevation. 
In this case, the respective MDB-spline bases satisfy a relationship analogous
to \eqref{eq:alpha_ki}, with coefficients $\alpha_i$ given by 
\eqref{eq:alpha2}, the only difference being that the nontrivial coefficients $\alpha_i\in \, ]0,1[$ correspond to $i=\ell-d_j+1,\ldots,\ell$. 
These coefficients can still be determined through \eqref{eq:KI_rev}, where $\hat \xi_j$ and $\xi_j$ are the Greville absciss\ae\, of $\widehat \SSS$ and $\SSS$, respectively.

On account of Proposition \ref{prop:Greville}, the computation of the Greville absciss\ae\, of $\widehat \SSS$ and $\SSS$ entails integrating the 
MDB-spline bases of the respective derivative spaces. Hence, an RDE step can be
described by the following triangular block, akin to \eqref{eq:basic_block}:
\vspace{-0.1cm}
\begin{center}
\begin{tikzcd}[column sep={3em}]
D\SSS 
\arrow{rd}[inner sep=1pt]{G} 
 &  \\
\hspace{-0.2cm}\widehat{\SSS}    
\arrow{r}[pos=0.25,inner sep=2pt]{\;\;\;\; RDE}  
& \SSS
\end{tikzcd}
\end{center}
\vspace{-1.6cm}
\begin{equation} \label{eq:basic_blockRDE}
\end{equation}

\vspace{1.1cm}
Repeated applications of the above basic block,  
give rise to the following  \emph{rhomboid} scheme, which is the RDE counterpart of \eqref{eq:tri}, and in which $\SSS^{k,n}$, for $k=0,\ldots,r$, $n=0,\ldots,g$,   indicate the derivative spaces $D^{r-k}\SSS_n$ with 
$r\coloneqq \max \{1,\max\{k_i \in \KKK \} \}$:
%$r\coloneqq \max\{k_i \in \KKK \; | \; d_{i-1} \neq d_{i}\}$:

\vspace{1.8cm}
\begin{equation}
\label{eq:scheme1}
\end{equation}

\vspace{-3cm}
\begin{center}
\begin{tikzcd}[column sep={2.3em}]
  \SSS^{0,0} 
  \arrow{rd}[inner sep=1pt]{G}  
  \arrow{r}[pos=0.25,inner sep=2pt]{\;\;\;\; RDE}  
  & \SSS^{0,1} 
    \arrow{rd}[inner sep=1pt]{G}  
  \arrow{r}[pos=0.25,inner sep=2pt]{\;\;\;\; RDE}
  & \cdots & \SSS^{0,g-1} 
    \arrow{rd}[inner sep=1pt]{G}  
  \arrow{r}[pos=0.25,inner sep=2pt]{\;\;\;\; RDE}
  & \SSS^{0,g} 
  \arrow{rd}[inner sep=1pt]{G}  
  & & & \\
    & \cdots   
    \arrow{r}[pos=0.25,inner sep=2pt]{\;\;\;\; RDE}   
    \arrow{rd}[inner sep=1pt]{G}  
    & \cdots   \arrow{rd}[inner sep=1pt]{G}    \arrow{r}[pos=0.25,inner sep=2pt]{\;\;\;\; RDE}
    & \cdots  & \cdots   \arrow{r}[pos=0.25,inner sep=2pt]{\;\;\;\; RDE}  \arrow{rd}[inner sep=1pt]{G}   & \cdots  \arrow{rd}[inner sep=1pt]{G}  &  & \\
    & & \SSS^{r-1,0} \arrow{rd}[inner sep=1pt]{G}  
\arrow{r}[pos=0.25,inner sep=2pt]{\;\;\;\; RDE}  
& \SSS^{r-1,1} 
\arrow{rd}[inner sep=1pt]{G}    \arrow{r}[pos=0.25,inner sep=2pt]{\;\;\;\; RDE}
& \cdots 
& \SSS^{r-1,g-1} 
\arrow{rd}[inner sep=1pt]{G}  
\arrow{r}[pos=0.25,inner sep=2pt]{\;\;\;\; RDE}  
& \SSS^{r-1,g} 
\arrow{rd}[inner sep=1pt]{G}  
& \\
    & & & \SSS^{r,0} 
    \arrow{r}[pos=0.25,inner sep=2pt]{\;\;\;\; RDE} 
    & \SSS^{r,1}   \arrow{r}[pos=0.25,inner sep=2pt]{\;\;\;\; RDE}
    & \cdots 
    & \SSS^{r,g-1} 
    \arrow{r}[pos=0.25,inner sep=2pt]{\;\;\;\; RDE} 
    & \SSS^{r,g} \\
\end{tikzcd}
\end{center}

\vspace{-0.4cm}
Note that spaces $\SSS^{0,n}$ are $C^0$ MD-spline spaces and may feature breakpoints with negative continuities, as well as intervals with negative degrees.
These correspond to the degenerate spaces involved in the generation of the MDB-spline basis of 
$\SSS^{r,n}$ by the integral recurrence relation \eqref{def:int_rec}.
Spaces $\SSS^{k,0}$ are instead conventional spline spaces of degree $m-(r-k)$.
For all spaces $\SSS^{0,n}$ and $\SSS^{k,0}$ the MDB-spline basis functions, as
well as their integrals, can be straightforwardly computed by standard
approaches, as discussed in section \ref{sec:C0}. 

Using the rhomboid scheme and the corresponding matrix representations leads to
Algorithm \ref{alg:5}, where $K(k,n)$ indicates the dimension of space
$\SSS^{k,n}$, that is 
$K(k,n)=K-(r-k)+(g-n)$, being $K$ the dimension of the target space $\SSS\equiv\SSS^{r,g}$.
The algorithm requires as input the vectors $\bIN^{k,0}$, $k=0,\ldots,r$, containing the integrals of the
  conventional B-spline bases of spaces $\SSS^{k,0}$ and the vectors $\bIN^{0,n}$,
  $n=0,\ldots,g$, of the integrals of the $C^0$ MDB-splines of the spaces $\SSS^{0,n}$. It returns as output 
  the matrices $\bM^{k,g}$ such that $\bN^{k,g} = \bM^{k,g} \bN^{k,0}$,
  $k=0,\dots,r$, where $\bN^{k,0}$ is a conventional B-spline basis of degree 
  $m-r+k$. In particular $\bM^{r,g}$ is the matrix representation of the MDB-spline 
  basis $\bN^{r,g}$ of the target space $\SSS_g\equiv\SSS^{r,g}$ with respect to
  the B-spline basis $\bN^{r,0}$ of the conventional spline space $\SSS_0
  \equiv \SSS^{r,0}$ of degree $m\coloneqq \max_i\{d_i\}$.   
We remark that, while the RKI Algorithm joins two spaces at a time, the RDE works
globally, \ie by carrying out a sequence of reverse degree elevations on all the intervals involved.

\begin{algorithm}
{\small
\setstretch{1}
\SetAlgoLined
  \KwData{$\bIN^{k,0}$, $k=0,\ldots,r$; $\bIN^{0,n}$, $n=0,\ldots,g$.}
  \KwResult{$\bM^{k,g}$, $k=1,\ldots,r$.}
 \For{$k \leftarrow 1$ \KwTo $r$}{
   $\bM^{k,0}\leftarrow\textrm{I}_{K(k,0)}$ \;
     $n \leftarrow 0$ \;
     \For{$j \leftarrow 0$ \KwTo $q$}{
     \For{$h \leftarrow m-1$ \KwTo $d_j$}{
       $n \leftarrow n+1$ \;
     $ie \leftarrow d_0^{k,n}+1+\sum_{h=1}^j d_h^{k,n}-k_{h}^{k,n}$ \;
     $ib \leftarrow ie-d_j^{k,n}+1$ \;
     $\alpha_{ib-1}^{k-1,n} \leftarrow 1$ \; 
     $\beta_{ie}^{k-1,n} \leftarrow 1$ \;
     \For{$i \leftarrow ib$ \KwTo $ie$}{
       \eIf {$k==1$}{
 $\alpha_i^{1,n} \leftarrow 
       \left(\sum_{h=ib-1}^{i-1}IN_{h}^{0,n-1}-\sum_{h=ib-1}^{i-2}IN_{h}^{0,n}\right)/IN_{i-1}^{0,n}$ \;
       $\beta_i^{1,n} \leftarrow 
       \left(\sum_{h=ib-1}^{i-1}IN_{h}^{0,n}-\sum_{h=ib-1}^{i-1}IN_{h}^{0,n-1}\right)/IN_{i-1}^{0,n}$ \;
     }{
       $\alpha_i^{k,n} \leftarrow \alpha_{i-1}^{k-1,n} {IN_{i-1}^{k-1,n-1}}/{IN_{i-1}^{k-1,n}}$ \;
       $\beta_i^{k,n} \leftarrow \beta_{i}^{k-1,n} {IN_{i}^{k-1,n-1}}/{IN_{i-1}^{k-1,n}}$ ; \hspace{0.4cm}//$\beta_i^{k,n}$ store $1-\alpha_i^{k,n}$
     }
     }
    $\bM^{k,n} \leftarrow \bA^{k,n} \bM^{k,n-1}$ \;
    \If {$k<r$}{
      $\bIN^{k,n} \leftarrow \bM^{k,n} \cdot \bIN^{k,0} $ \;
    }
   }
   }
}
}
    \caption{Matrix representation relative to the conventional B-spline basis
    of degree $m \coloneqq \max_i \{d_i\}$, with 
      $r\coloneqq \max\{1, \max\{k_i \in \KKK\}\}$
    %$r\coloneqq \max\{k_i \in \KKK\; | \; d_{i-1} \neq d_{i}\}$
  and $g \coloneqq \sum_{j=0}^q (m - d_j)$.
  }
\label{alg:5}
\end{algorithm}

\begin{exmp}[Matrix representation via reverse degree elevation] \label{ex:RDE_example1}
In the interval $[0,3]$, let us consider the MD-spline space
  $\SSS(\PPP_{\bd},\XXX,\KKK)$ with $\XXX=\{1,2\}$, $\bd=(4,2,3)$ and $\KKK=(2,1)$.
  Let us also consider the spaces $\SSS_2$, $\SSS_1$ and $\SSS_0$ defined on the
  same interval and having same breakpoint sequence and
  continuities as $\SSS \equiv \SSS_3$ and such that
  $\SSS_3 \subset \SSS_2 \subset \SSS_1 \subset \SSS_0$.
  In particular we take 
  $\SSS_0$ to be the MD-spline space having $\bd_0=(4,4,4)$, 
  $\SSS_1$ having $\bd_1=(4,3,4)$, $\SSS_2$ having $\bd_2=(4,2,4)$
  and $\SSS_3$ having $\bd_3=(4,2,3)$.
  Note that $\SSS_0$ is a conventional spline space and hence its B-spline
  basis and corresponding integrals can efficiently be computed by known methods.
In this way one can pass from the MDB-spline basis of $\SSS_0$ 
 to that of $\SSS_1$, from that of $\SSS_1$ to that of $\SSS_2$ and finally
 from the MDB-spline basis of $\SSS_2$ to that of $\SSS_3$ performing three successive rounds of RDE. 

The rhomboid scheme of spaces in this example is as follows, where $\SSS_n\equiv\SSS^{2,n}$, $n=0,\dots,3$:

\vspace{-0.3cm}
\medskip
\begin{center}
\begin{tikzcd}[column sep={2.5em},scale cd=0.8]
  \SSS^{0,0}:(2_{\; 0}2_{-1}2) 
 \arrow{rd}[inner sep=1pt]{} 
 \arrow{r}[pos=0.25,inner sep=2pt]{\;\;\;\; \alpha_{4}^{0,1} }  
  & \SSS^{0,1}:(2_{\; 0}1_{-1}2) 
  \arrow{r}[pos=0.25,inner sep=2pt]{\;\;\;\; } 
  \arrow{rd}[inner sep=1pt]{}  
  & \SSS^{0,2}:(2_{\; 0}0_{-1}2)
  \arrow{rd}[inner sep=1pt]{}  
    \arrow{r}[pos=0.25,inner sep=2pt]{\;\;\;\;  \alpha_{3}^{0,3} }  
   & \SSS^{0,3}:(2_{\; 0}0_{-1}1) 
   \arrow{rd}[inner sep=1pt]{}  
   & & \\ 
    & \SSS^{1,0}:(3_{\; 1}3_{\;0},3) 
      \arrow{r}[pos=0.25,inner sep=2pt]{\;\;\;\; \alpha_i^{1,1}  \, i=4,5}  
    \arrow{rd}[inner sep=1pt]{}  
    & \SSS^{1,1}:(3_{\; 1}2_{\;0} 3) 
      \arrow{r}[pos=0.25,inner sep=2pt]{\;\;\;\; \alpha_4^{1,2}  }  
    \arrow{rd}[inner sep=1pt]{}  
    & \SSS^{1,2}:(3_{\; 1}1_{\;0} 3) 
      \arrow{r}[pos=0.25,inner sep=2pt]{\;\;\;\; \alpha_i^{1,3}  \, i=5,6}  
    \arrow{rd}[inner sep=1pt]{}  
    & \SSS^{1,3}:(3_{\; 1}1_{\;0} 2) 
    \arrow{rd}[inner sep=1pt]{}  
    & \\ 
    & & \SSS^{2,0}:(4_{\; 2}4_{\;1} 4) 
      \arrow{r}[pos=0.25,inner sep=2pt]{\;\;\;\; \alpha_i^{2,1}, \, i=4,5,6}  
    & \SSS^{2,1}:(4_{\; 2}3_{\;1} 4) 
      \arrow{r}[pos=0.25,inner sep=2pt]{\;\;\;\; \alpha_i^{2,2}, \, i=2,1}  
    & \SSS^{2,2}:(4_{\; 2}2_{\;1} 4) 
      \arrow{r}[pos=0.25,inner sep=2pt]{\;\;\;\; \alpha_i^{2,3},\,  i=5,6,7}  
      &
    \SSS^{2,3}:(4_{\; 2}2_{\;1} 3) \\
\end{tikzcd}
\end{center}

\vspace{-0.7cm}
The nontrivial coefficients $\alpha_i^{k,n}$ necessary for each RDE step can be computed by integrating the MDB-spline functions of derivative spaces as in \eqref{eq:RKI_noint}. We remark that a similar result was proven in \cite{Shenetal2016} for a less general subclass of MD-splines.
  For instance, knowing the MDB-splines of space $D\SSS_1 \equiv D\SSS^{2,1}
  =\SSS^{1,1}$, which is defined on the same interval and breakpoint sequence as 
  $\SSS$, but has degrees $\bd =(3,2,3)$ and continuities $\KKK=(1,0)$, we can
  determine the coefficients $\alpha_i^{2,1}$, $i=4,5,6$, to pass from $\SSS_0 \equiv \SSS^{2,0}$
  to $\SSS_1 \equiv \SSS^{2,1}$. 
With reference to the first line of the rhomboid scheme, observe how going from
$\SSS^{0,0}$ to $\SSS^{0,1} $ it is necessary to calculate only one
coefficient, as one passes from degree $2$ to $1$. For the same reason it is
necessary to calculate only one coefficient  $\alpha_3^{0,3}$ for the RDE step from $\SSS^{0,2} $ to $\SSS^{0,3}$.
The RDE step from $\SSS^{0,1}$ to $\SSS^{0,2}$, instead, does not involve non-trivial RDE coefficients, as they are all equal to 0 or 1. 
\end{exmp}

\begin{rem}
The procedure can be modified in such a way to avoid any subtraction operation and therefore improve its numerical stability. 
In fact, the coefficients in the first row of the rhomboid scheme (in the example $\alpha_4^{0,1}$ and $\alpha_3^{0,3}$) are determined by
 \eqref{eq:KI_rev}, whose numerator can be computed by the middle line of
  \eqref{eq:dif_sum} (Algorithm \ref{alg:5}, lines 13 and 14).
 However, we can as well further differentiate these spaces, 
 in such a way that the first two rows of the scheme become:
%\vspace{0.2cm}
\begin{center}
\begin{tikzcd}[column sep={2.5em},scale cd=0.8]
    (1_{\; -1}1_{-2}1) 
     \arrow{r}[pos=0.25,inner sep=2pt]{}  
         \arrow{rd}[inner sep=1pt]{}  
     & (1_{\; -1}0_{-2}1) 
          \arrow{r}[pos=0.25,inner sep=2pt]{}  
         \arrow{rd}[inner sep=1pt]{}  
     & (1_{\; -1}-1_{-2}1) 
          \arrow{r}[pos=0.25,inner sep=2pt]{}  
         \arrow{rd}[inner sep=1pt]{}  
     & (1_{\; -1}-1_{-2}0)     
         \arrow{rd}[inner sep=1pt]{}  & \\ 
    & \SSS^{0,0}:(2_{\; 0}2_{-1}2) 
     \arrow{r}[pos=0.25,inner sep=2pt]{\;\;\;\; \alpha_{4}^{0,1} }  
  & \SSS^{0,1}:(2_{\; 0}1_{-1}2) 
 \arrow{r}[pos=0.25,inner sep=2pt]{}  
  & \SSS^{0,2}:(2_{\; 0}0_{-1}2) 
 \arrow{r}[pos=0.25,inner sep=2pt]{\;\;\;\; \alpha_{3}^{0,3} }  
  & \SSS^{0,3}:(2_{\; 0}0_{-1}1) \\
\end{tikzcd}
\end{center}

\vspace{-0.6cm}
In this extended version of the scheme also the coefficients $\alpha_4^{0,1}$ and  $\alpha_3^{0,3}$
can be determined through \eqref{eq:RKI_noint}, avoiding the aforementioned differences.
This variant of Algorithm \ref{alg:5} can be obtained by defining $r$ as
\[
 r \coloneqq \max\{d_i\}-1\,
 \]
%\[
% r \coloneqq \max\{\max\{k_i \in \KKK\; | \; d_{i-1} \neq d_{i}\}, \max\{d_i\}-1\},
% \]
 and summarizing lines from 12 to 18 of the algorithm by lines 16 and 17 only.
\end{rem}

The RDE-based algorithm is numerically stable for the same considerations made in the RKI case and all the numerical tests carried out have verified its excellent accuracy in the calculation of both the MDB-spline functions and the representation matrix.

\begin{rem}[Mixed RDE-RKI Algorithm]
It is also possible to design an algorithm that simultaneously performs RDE and RKI steps, like the one proposed in \cite{BC2020}.
In this case we shall choose the initial space $\SSS_0$, containing $\SSS$,  in such a way that 
$\SSS$ can be reached through a sequence of successive steps of RDE and RKI type.
We shall  hence break the target space $\SSS$
into sections, each of which will be generated from the corresponding section
  of $\SSS_0$ via RDE using Algorithm \ref{alg:5} or through RKI joins of the
  corresponding sections in $\SSS_0$ via Algorithm \ref{alg:2}.
At this point it is necessary to  proceed by first addressing all the sections requiring RDE, obtaining the representation matrices
of the corresponding MDB-spline bases, and then joining by RKI the resulting MD-spline spaces, 
starting from the one with the highest continuity up to the one with the least continuity.
\end{rem}

\section{Conclusions}\label{sec:conclusion}
We have presented an algorithm for the efficient evaluation of multi-degree B-splines, which, unlike previous approaches, is numerically stable. 
This has been emphasized via theoretical analysis of the involved operations, as well as by numerical experiments and comparisons with previous methods. 
From the point of view of numerical stability, the proposed method is at present the most effective tool for evaluating multi-degree splines.
Furthermore, similar ideas could be employed in the more general context of piecewise Chebyshevian splines of variable dimensions, which have been the subject of recent studies \cite{BCR2017,chebMD2019}.

\section*{Acknowledgements}
%Financial support from the Italian GNCS-INdAM is gratefully acknowledged.
The authors gratefully acknowledge support from INdAM-GNCS Gruppo Nazionale per il Calcolo Scientifico.

%%% Bibliography

%\nocite{*}

% Springer: references after appendices

\section*{References}

% Elsevier: references before appendices
\ifelsevier
\bibliographystyle{model3-num-names} % Computer-Aided Design
\bibliography{biblio} % name your BibTeX data base
\fi

%\appendix
%\section{Appendix}

%%% Bibliography

% Springer: references after appendices
\ifspringer
% BibTeX users please use one of the following styles
%\bibliographystyle{spbasic} % basic style, author-year citations
\bibliographystyle{spmpsci} % mathematics and physical sciences
\bibliography{biblio} % name your BibTeX data base
%\printbibliography % biblatex package
% ---
% Non-BibTeX users please use thebibliography environment
% and use \bibitem to create references. Consult the Instructions
% for authors for reference list style.
%\begin{thebibliography}{}
%\bibitem{RefJ}
%% Format for Journal Reference
%Author, Article title, Journal, Volume, page numbers (year)
%% Format for books
%\bibitem{RefB}
%Author, Book title, page numbers. Publisher, place (year)
%\end{thebibliography}
\fi

\end{document}